\documentclass[a4paper]{article}

\usepackage[centertags]{amsmath}
\usepackage{amsthm}
\usepackage{fullpage}

\usepackage{amssymb,amsfonts}

\usepackage[colorinlistoftodos,prependcaption]{todonotes}

\usepackage{xcolor}
\usepackage{graphicx}
\definecolor{cb2red}{RGB}{228,26,28}
\definecolor{cb2blue}{RGB}{55,126,184}
\definecolor{cb2green}{RGB}{77,175,74}
\definecolor{cb2orange}{RGB}{255,127,0}

\RequirePackage{doi}
\usepackage{hyperref}
\hypersetup{colorlinks,
    linkcolor=cb2blue,
    filecolor=cb2blue,
    citecolor=cb2blue,
    urlcolor=cb2blue}
\usepackage{url}

\usepackage{enumitem}
\usepackage{subcaption}
\usepackage{booktabs}
\usepackage{mathtools}

\usepackage{tikz}
\usepackage{pgfplots}
\pgfplotsset{compat=1.18}
\usepgfplotslibrary{groupplots}

\newcommand{\R}{\mathbb{R}}
\newcommand{\N}{\mathbb{N}}

\newcommand{\coloneqq}{:=}
\DeclareMathOperator*{\argmin}{\arg\min}
\DeclareMathOperator*{\minimize}{minimize}

\DeclareMathOperator{\subjectto}{subject~to}
\DeclareMathOperator{\mand}{\mathrm{and}}

\DeclareMathOperator{\Fix}{Fix}
\DeclareMathOperator{\prox}{prox}

\DeclareMathOperator{\dom}{dom}
\DeclareMathOperator{\ran}{ran}

\renewcommand{\H}{\ensuremath{\mathcal{H}}}

\newcommand{\gr}[1]{\mathrm{gra}({#1})}

\newcommand{\conv}{\mathrm{conv}}

\newcommand{\elcap}{\ifmmode \ell_{1}\text{--cap}\else $\ell_{1}$--cap\fi}

\newtheorem{theorem}{Theorem}[section]
\newtheorem{lemma}[theorem]{Lemma}
\newtheorem{proposition}[theorem]{Proposition}
\newtheorem{corollary}[theorem]{Corollary}
\theoremstyle{definition}
\newtheorem{definition}[theorem]{Definition}
\newtheorem{remark}[theorem]{Remark}
\newtheorem{assumption}[theorem]{Assumption}

\title{\bfseries Algorithm duality from resolvent Moreau identities}
\author{Andrew Calcan\thanks{Centre for Optimisation and Decision Science, Curtin University, Australia}\and%
    Alberto De~Marchi\thanks{Department of Aerospace Engineering, University of the Bundeswehr Munich, Germany}\and%
    Scott B. Lindstrom$^*$}

\begin{document}
\maketitle

\newcommand{\mykeywords}{Duality, Gabay duality, Moreau's identity, resolvents, splitting methods, nonconvex optimization, Douglas--Rachford method, ADMM, LASSO, regression.} 
\newcommand{\mymsccodes}{49N15, 
65K10, 
90C46.
} 

\begin{abstract}
Duality is classically defined as a relationship between objective functions, one that breaks down beyond convexity. We show that a more fundamental duality persists between the resolvent operators used to solve associated problems. We establish a framework from which we obtain complete characterizations of the duality--like correspondences between proximal fixed--point iterations and their alternating minimization scheme counterparts. Namely, we show the known primal-dual relationships between forward--backward splitting and Tseng's alternating minimization algorithm, and between the Douglas--Rachford method and alternating direction method of multipliers, to hold generically and irrespective of problem assumptions. The approach considered herein avoids the machinery of convex analysis entirely, providing a definitive formulation of algorithmic dualities in maximal generality while greatly simplifying their proofs. In particular, assumptions such as monotonicity, weak monotonicity, Lipschitz continuity, and injectivity of linear operators are all shown to be unnecessary. In addition, we utilize our framework to reveal and analyze a novel primal--dual relationship between backward--backward splitting and a special instance of the alternating minimization algorithm. This illustrates how our duality framework provides a principled basis for primal--dual algorithm design and analysis in nonmonotone settings. Finally, our experiments show that our backward--backward splitting--based approach reliably outperforms the FISTA and SR3 algorithms for the LASSO problem.
    
\medskip
\noindent\textbf{Keywords}
\mykeywords
\end{abstract}

\section{Introduction}

We are motivated by \textit{primal problems} of the form
\begin{equation}
\minimize_{(x,z)\in\H_x\times\H_z}\quad f(x)+g(z) \quad \subjectto \quad Ax+Bz+d=0,
\label{primal_optimisation_problem}
\end{equation}
where $\H_x$, $\H_z$ and $\H$ are real Hilbert spaces, $f:\H_x\to(-\infty,+\infty]$ and $g:\H_z\to(-\infty,+\infty]$ are proper lower semicontinuous and possibly nonconvex functions, $A:\H_x\rightarrow \H$, $B:\H_z\rightarrow \H$ are bounded linear operators and $d\in\H$. Problems of the form \eqref{primal_optimisation_problem} regularly appear across many disciplines, including statistics, engineering, physics and finance \cite{lindstrom2021survey}. 

Problem \eqref{primal_optimisation_problem} is commonly addressed by exploiting problem duality and employing primal--dual algorithms. In the case that $f$ and $g$ are also convex, the classical and well--established notion of duality is that of Fenchel and Rockafellar \cite{rockafellar1970convexanalysis}, which relies upon the connection between a proper lower semicontinuous and convex function $h:\H\to (-\infty,+\infty]$ and its Legendre--Fenchel transform $h^*(u)\coloneqq \sup_x \langle u , x \rangle - h(u)$. This relationship is exploited and forms the basis for several popular and effective primal--dual splitting methods \cite{ScottAndrew,ChambollePock,MoursiZinchenko,poon2019trajectory,tseng1991fbsduality}. 

In the context of Fenchel--Rockafellar duality, Daniel Gabay uncovered that the popular alternating direction method of multipliers (ADMM) applied to \eqref{primal_optimisation_problem} is equivalent to the Douglas--Rachford method (DR) applied to its associated dual problem \cite{gabay1983chapter}.
This seminal discovery has led to improved understanding of both methods, with ADMM convergence results often derived through analysis of DR \cite{EcksteinYao,GiselssonLinear,yuan2012convergence,themelis_2022}.
We refer the interested reader to \cite[Appendix 5]{lindstrom2021survey} for an extended synopsis of Gabay's original result.
It was later revealed by Paul Tseng that a connection similar to that of ADMM and DR is shared between forward--backward splitting (FBS) and the alternating minimization algorithm (AMA) \cite{tseng1991fbsduality}.
The results of Gabay and Tseng rely upon Fenchel--Rockafellar duality and hence the properties and machinery of convex analysis.
We explore the connection shared between algorithms using a novel notion of duality, with our contributions summarized below.
In recognition of Gabay's seminal observation of the connection between ADMM and DR, we refer to our algorithmic duality framework as \emph{Gabay duality}.

\paragraph{Contributions}

This work establishes Gabay duality: a framework and notion of duality between resolvent splitting methods and a general alternating primal--dual class of algorithms that holds in settings where classical duality breaks down. We show in Theorem~\ref{thm:Moreaus_identity} that even with no problem assumptions, every generalized resolvent operator possesses a unique resolvent to which it is linked in a meaningful and explicitly describable way. We call this unique resolvent its Gabay dual. Using Gabay duals, we show in Theorems \ref{thm:AMA_duality} and \ref{thm:ADMM_duality} that the classical relationships between FBS and AMA and between DR and ADMM hold even in the general nonconvex setting.
Interestingly, the same advantage that makes these results fully general---their non-dependency on the machinery of convex analysis---also makes them the simplest in the literature.
We furthermore provide explicit identities linking the sequences of these methods, immediately allowing practitioners to apply duality--based extrapolation schemes, such as dynamical systems approaches \cite{lindstrom2022computable,poon2019trajectory}, to nonconvex problems.
We further illustrate the usefulness of Gabay duality by deriving and analyzing a powerful primal algorithmic approach whose dual is backward--backward splitting. These contributions, along with the ways in which they subsume classical theory, are summarized in Tables~\ref{tab:problem_duality}, \ref{tab:resolvent_dualities}, and \ref{tab:algorithm_dualities}, whose notation we will next introduce. Finally, we numerically benchmark our primal backward--backward splitting approach on both convex and nonconvex instances of regression problems, by comparing its performance to the popular fast proximal--gradient method and SR3 algorithms \cite{beck2009fista,zheng2019sr3}. We observe that our approach reliably and significantly outperforms both fast proximal--gradient method and SR3.

\bigskip

We say the tuple $(x,z)$ is a stationary point of \eqref{primal_optimisation_problem} if there exists $\lambda\in\H$ such that $(x,z,\lambda)$ is a solution to the following problem: 
\begin{equation}
    \text{Find}\quad (x,z,\lambda) \in \H_x \times \H_z \times \H \quad \text{such that} \quad 0\in
    \begin{bmatrix}
    F(x)+A^*\lambda\\
    G(z)+B^*\lambda\\
    Ax+Bz+d
    \end{bmatrix},
    \label{primal_inclusion_problem}\tag{$\ensuremath{\mathcal{P}}$-inc}
\end{equation}
where $F$ and $G$ are the potentially set--valued subdifferential operators associated with $f$ and $g$, respectively. Problem \eqref{primal_inclusion_problem} forms the basis of our duality system, capturing both nonconvex and general operator settings. We associate to the \emph{primal inclusion problem} \eqref{primal_inclusion_problem} the \emph{dual inclusion problem}:
\begin{equation}
\text{Find}\;\; \lambda \in \H \quad \text{such that} \quad 0\in D_F(\lambda)+D_G(\lambda),
\label{dual_inclusion_problem}\tag{$\ensuremath{\mathcal{D}}$-inc}
\end{equation}
where we have 
$$
D_F\coloneqq (-A\circ F^{-1}\circ -A^*)(\cdot)-d_F,\quad D_G\coloneqq(-B\circ G^{-1}\circ -B^*)(\cdot)-d_G,
$$
and $d_G+d_F=d$.\footnote{Typically simplified as $(d_F,d_G) \in \{ (d,0),(0,d)\}$---we retain the ambiguity for complete generality of our derivations.} We describe the connection between problems \eqref{primal_inclusion_problem} and \eqref{dual_inclusion_problem} in Theorem~\ref{prop:primal_dual_problems}, after which we discuss how our notion of duality subsumes both Fenchel--Rockafellar duality and Attouch--Th\'{e}ra duality \cite{attouch1996duality}, where the latter relates the zeros of the sum of two operators $J,K:\H\rightrightarrows\H$ to the zeros of $J^{-1}$ and $K^{-\otimes}$, where $K^{-\otimes}\coloneqq -I\circ K^{-1}\circ -I$. Table~\ref{tab:problem_duality} summarizes and contextualizes our contribution, together with how it subsumes the classical theory.

\begin{table}[htb!]
    \centering%
\begin{tabular}{c p{4.03cm} c}
    \textbf{Primal} && \textbf{Dual} \\
    \midrule
    \shortstack{$\displaystyle \minimize_{(x,z)}\; f(x)+g(z)$ subject \\ to $Ax+Bz+d_F+d_G=0$}
    &
    \raisebox{2.5ex}{
    $\;\;\;\xLeftrightarrow[\text{(special case of Thm.~\ref{prop:primal_dual_problems})}]{\text{(convex case \cite{rockafellar1970convexanalysis})}}$}
    &
    \shortstack{$\displaystyle \minimize_{\lambda}\; f^*(-A^*\lambda)+$\\ $g^*(-B^*\lambda)-\langle \lambda,d_F+d_G\rangle$ \color{white}I$^I$\color{black}} \\
    &&\\
    \shortstack{find $y$
    such that \\ $0\in J(y)+K(y)$}
    &
    \raisebox{1.5ex}{
    $\;\;\;\xLeftrightarrow[\text{(special case of Thm.~\ref{prop:primal_dual_problems})}]{\text{(monotone case \cite{attouch1996duality})}}$}
    &
    \shortstack{find $\lambda$ such that \\ $0\in J^{-1}(\lambda)+K^{-\otimes}(\lambda)$.}\\
    &&\\
    \shortstack{find $(x,z,\lambda)$
    such that \\ $0\in
    \begin{bmatrix}
    F(x)+A^*\lambda\\[1pt]
    G(z)+B^*\lambda\\[1pt]
    Ax+Bz+d_F+d_G
    \end{bmatrix}$}
    &
    \raisebox{1.5ex}{
    $\;\;\;\xLeftrightarrow{\;\;\;\;\qquad\text{(Thm.~\ref{prop:primal_dual_problems})}\qquad\;\;\;\;}$}
    &
    \shortstack{find $\lambda$ such that \\ $0\in D_F(\lambda)+D_G(\lambda)$.}
\end{tabular}
\caption{Relevant primal--dual relationships.}
\label{tab:problem_duality}
\end{table}

We establish in Theorem~\ref{thm:Moreaus_identity} a connection between the \emph{primal operators} $F$ and $G$ present in \eqref{primal_inclusion_problem} and the \emph{dual operators} $D_F$ and $D_G$ arising in \eqref{dual_inclusion_problem}. The connection we show generalizes the classical Moreau proximal identity linking the proximal operators of functions and their Legendre--Fenchel transforms in the Fenchel--Rockafellar setting \cite{Moreau1965}. However, our result relaxes all assumptions on the set--valued mappings and linear operators involved, constituting a complete (maximally general) identity for pairing the operators in \eqref{primal_inclusion_problem} and \eqref{dual_inclusion_problem}. For this reason, we refer to Theorem~\ref{thm:Moreaus_identity} as the \emph{complete Moreau's identity}. This identity constitutes the basis of our analysis. The relationships between primal and dual mappings in both the classical Fenchel--Rockafellar sense and in our maximal sense are illustrated in Table~\ref{tab:resolvent_dualities}.

\begin{table}[htb]
    \centering%
\begin{tabular}{c p{4.03cm} c}
    \textbf{Primal} && \textbf{Dual} \\
    \midrule
    $\prox_h$
    &
    $\;\;\;\xLeftrightarrow[\text{(special case of Thm.~\ref{thm:Moreaus_identity})}]{\text{(convex case \cite{Moreau1965})}}$
    &
    $\prox_{h^*}$ \\[6pt]
    &&\\
    $(M^*M+H)^{-1}$
    &
    $\;\;\;\xLeftrightarrow{\,\;\;\qquad\text{(Thm. \ref{thm:Moreaus_identity})}\,\;\;\qquad}$
    &
    $(I+D_H)^{-1}$\\
\end{tabular}
\caption{Proximal and resolvent Moreau identities.}
\label{tab:resolvent_dualities}
\end{table}

By relying on Fenchel--Rockafellar duality, algorithmic dualities such as those shown by Gabay and Tseng inherently rely on convexity assumptions.
Our work operates in the operator--theoretic setting and exploits the general relationship between problems \eqref{primal_inclusion_problem} and \eqref{dual_inclusion_problem}.
Subsequently, we find that the connection between the algorithms originally shown by Gabay and Tseng can be shown algebraically and hence are independent of problem structure. Importantly, this means that researchers can now use popular primal-dual extrapolation schemes like A3DMM \cite{poon2019trajectory} for nonconvex problems, both in the sense that governing dual algorithms are well defined and also in the sense that our work shows explicitly how to compute them. Accordingly, we use our Gabay duality framework to furnish the fully general primal-dual relationships between FBS and AMA, and between DR and ADMM, so that any practitioner can use our work as a dictionary for exploiting their primal-dual relationships to extrapolate. Importantly, our resolvent framework is not limited to these algorithmic pairings, or even to the two operator setting in which we present our work, but is fully adaptable for resolvent operator splitting schemes in general. For example, we use it to uncover a connection between \emph{backward--backward splitting} and a special instance of AMA, which we refer to as \emph{primal backward--backward splitting} (PBBS).
We summarize the three examplar algorithmic dualities in Table~\ref{tab:algorithm_dualities}, highlighting how this work extends the connections known in the convex optimization setting. 

\begin{table}[htb]
    \centering%
\begin{tabular}{c p{3.33cm} c}
    \textbf{Primal} && \textbf{Dual} \\
    \midrule
    ADMM
    &
    $\;\;\;\xLeftrightarrow[\text{(all cases Thm.~\ref{thm:ADMM_duality})}]{\text{(convex case \cite{gabay1983chapter})}}$
    &
    Douglas--Rachford method \\[-6pt]
    &&\\
    AMA
    &
    $\;\;\;\xLeftrightarrow[\text{(all cases Thm.~\ref{thm:AMA_duality})}]{\text{(convex case \cite{tseng1991fbsduality})}}$
    &
    forward--backward splitting\\[-6pt]
    &&\\
    PBBS
    &
    $\;\;\;\xLeftrightarrow{\text{(all cases Thm.~\ref{thm:PBBS_duality})}}$
    &
    backward--backward splitting
\end{tabular}
\caption{Algorithmic dualities.}
\label{tab:algorithm_dualities}
\end{table}

\paragraph{Outline}
The rest of this paper is organized as follows.
Section~\ref{sec:background} reviews basic concepts that will be used throughout.
In section~\ref{sec:Gabay_duality} we introduce Gabay duality and utilize it to extend the known connections between FBS and AMA and DR and ADMM in sections \ref{sec:FBSandAMA} and \ref{sec:DRandADMM}, respectively.
In section~\ref{sec:PBBS} we use our framework to derive and analyze a novel primal--dual approach, which we numerically validate for regression problems in section~\ref{sec:experiment_results}.

\section{Background and Preliminaries}\label{sec:background}
Here and throughout, $I$ denotes the identity mapping.
We denote by $\Fix T = \{x : Tx=x\}$ the fixed point set of a set-valued operator $T:\H \rightrightarrows\H$; $T^{-1}$ is the set-valued `inverse' map that satisfies $y \in Tx \iff x \in T^{-1}y$.
The graph of a potentially set-valued mapping $F$ is denoted by $\gr{F}:=\{(x,y):y\in F(x)\}$.
For sets $U$ and $V$ we define $U+V:=\{u+v:u\in U,\; v\in V\}$.
We have that $\Gamma_0(X)$ denotes the set of proper lower semicontinuous convex functions on $X$. Sequences are written $(x_k)_{k\in K}$ for an index set $K\subseteq \N_0$; we abbreviate this to $(x^k)_K$, and simply $(x^k)$ when $K=\N_0$. We first recall notions of generalized derivatives. 

\begin{definition}[Subdifferentials]
    Let $h:\H\rightarrow (-\infty,+\infty]$ be a proper and lower semicontinuous function.
    The Fr\'{e}chet subdifferential of $h$ at $x\in \H$ is given by
    $$
    \partial_{\mathrm{F}}h(x)\coloneqq \Big\{v\in \H :\underset{u\rightarrow x}{\lim\inf}\;\frac{h(u)-h(x)-\langle v,u-x\rangle}{\|u-x\|}\geq 0\Big\}.
    $$
    The Mordukhovich subdifferential of $h$ at $x\in \H$ is defined as
    $$
    \partial_{\mathrm{M}} h(x)\coloneqq \big\{x^*\in \H: \exists x_n\rightarrow x,\;x^*_n\in \partial_{\mathrm{F}}h(x_n)\text{ with } h(x_n)\rightarrow h(x),\;x_n^*\rightarrow x^*\big\}.
    $$
    Taking the convex hull of the Mordukhovich subdifferential yields the Clarke subdifferential, that is, $\partial h(x)\coloneqq \conv\{\partial_{\mathrm{M}}h(x)\}$. 
\end{definition}
Throughout we refer to the Clarke subdifferential simply as the subdifferential and denote it using $\partial$. 

If $h\in\Gamma_0(\H)$, the Fr\'{e}chet, Mordukhovich and Clarke subdifferentials coincide with the classical subdifferential: $\{v: h(y)\geq h(x)+\langle v,y-x\rangle \;\forall y\in \H\}$.
In such cases, $\partial h:\H\rightrightarrows \H$ is monotone: $\langle u - v,u' - v' \rangle \geq 0\; \forall (u,u'),(v,v')\in \gr{\partial h}$ and there exists no monotone operator whose graph strictly contains $\gr{\partial h}$.
The latter property implies that $\partial h$ is \emph{maximal} monotone. 

\begin{definition}[Gabay dual operator]\label{def:dual_operator}
    Let $H:\H_1\rightrightarrows\H_1$, $M:\H_1\rightarrow \H$, and $d\in\H$.
    We refer to 
    $$
    D_H\coloneqq(-M\circ H^{-1}\circ -M^*)(\cdot)-d
    $$
    as the \emph{Gabay dual operator} of $H$ with respect to $M$ with translation vector $d$.
\end{definition}
The dual relationship between \eqref{primal_inclusion_problem} and \eqref{dual_inclusion_problem} is described in the following theorem.
\begin{theorem}\label{prop:primal_dual_problems}
    Let $\lambda\in\H$. The following are equivalent: 
    \begin{enumerate}[label=(\roman*)]
        \item $0\in D_F(\lambda)+D_G(\lambda)$;
        \item there exists $x\in F^{-1}(-A^*\lambda)$ and $z\in G^{-1}(-B^*\lambda)$ such that
    \begin{equation}
    0\in
    \begin{bmatrix}
    F(x)+A^*\lambda\\
    G(z)+B^*\lambda\\
    Ax+Bz+d_F+d_G
    \end{bmatrix}.\nonumber
    \end{equation}
    \end{enumerate}
\end{theorem}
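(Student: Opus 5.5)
The equivalence follows by unwinding the definitions of $D_F$ and $D_G$ as set-valued compositions. The only tool is the convention $y\in Tx \iff x\in T^{-1}y$ from Section~\ref{sec:background}. Recall that $D_F(\lambda) = \{-Ax - d_F : x\in F^{-1}(-A^*\lambda)\}$. Indeed, $(-A\circ F^{-1}\circ -A^*)(\lambda)$ is the image under $-A$ of the set $F^{-1}(-A^*\lambda)$. Similarly, $D_G(\lambda)=\{-Bz-d_G : z\in G^{-1}(-B^*\lambda)\}$. Combined with the Minkowski-sum convention $U+V=\{u+v\}$, this gives the set identity
\[
D_F(\lambda)+D_G(\lambda)=\bigl\{-(Ax+Bz+d_F+d_G) : x\in F^{-1}(-A^*\lambda),\ z\in G^{-1}(-B^*\lambda)\bigr\}.
\]
I would state and check this identity first, directly from the definitions.

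For (i)$\Rightarrow$(ii): if $0$ lies in the sum, the identity produces $x\in F^{-1}(-A^*\lambda)$ and $z\in G^{-1}(-B^*\lambda)$ with $Ax+Bz+d_F+d_G=0$, which is the third row. For the first two rows, $x\in F^{-1}(-A^*\lambda)$ means $-A^*\lambda\in F(x)$, hence $0\in F(x)+A^*\lambda$. Likewise $-B^*\lambda\in G(z)$ gives $0\in G(z)+B^*\lambda$.

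For (ii)$\Rightarrow$(i): given such $x,z$, the element $-(Ax+Bz+d_F+d_G)=0$ belongs to the right-hand set of the identity, hence to $D_F(\lambda)+D_G(\lambda)$. Here the first two rows of (ii) are redundant with the membership hypotheses; only the third row is used.

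There is no real obstacle. The only point needing care is the set-valued bookkeeping: the composition $-A\circ F^{-1}\circ -A^*$ should be read pointwise as an image of sets, and empty values are allowed. If $F^{-1}(-A^*\lambda)=\emptyset$, then $D_F(\lambda)=\emptyset$, the sum is empty, and both (i) and (ii) fail, which is consistent with the claim. No monotonicity, convexity or injectivity of $A,B$ enters anywhere.
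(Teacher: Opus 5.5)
Your proposal is correct and follows essentially the same route as the paper: both write $D_F(\lambda)=\{-Ax-d_F : x\in F^{-1}(-A^*\lambda)\}$ and $D_G(\lambda)=\{-Bz-d_G : z\in G^{-1}(-B^*\lambda)\}$ and read each direction directly off the Minkowski sum. Your up-front set identity for $D_F(\lambda)+D_G(\lambda)$ just packages the two directions that the paper argues separately, and your remark on empty values is a harmless extra.
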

\begin{proof}
    ($\implies$).
    Observe that $D_F(\lambda)=\{-Ax-d_F:x\in F^{-1}(-A^*\lambda)\}$ and $D_G(\lambda)=\{-Bz-d_G:z\in G^{-1}(-B^*\lambda)\}$.
    Suppose that \eqref{dual_inclusion_problem} holds.
    Then there exists $u_F\in D_F(\lambda)$ and $u_G\in D_G(\lambda)$ such that 
    $$
    0=u_F+u_G=(-Ax-d_F)+(-Bz-d_G)\iff 0= Ax+Bz+d_F+d_G.
    $$
    By how we have expressed $D_F(\lambda)$, we have that $-A^*\lambda\in F(x)$ as required.
    Similarly we also have $0\in G(z)+B^*\lambda$.

    ($\impliedby$).
    Suppose there exists $x\in F^{-1}(-A^*\lambda)$ and $z\in G^{-1}(-B^*\lambda)$.
    It follows that $-Ax-d_F\in D_F(\lambda)$ and $-Bz-d_G\in D_G(\lambda)$.
    The third condition of \eqref{primal_inclusion_problem} grants
    $$
    0\in Ax+Bz+d_F+d_F\iff 0\in (-Ax-d_F)+(-Bz-d_G).
    $$
    Since $-Ax-d_F\in D_F(\lambda)$ and $-Bz-d_G\in D_G(\lambda)$, we have $0\in D_F(\lambda)+D_G(\lambda)$. 
\end{proof}

We note that the classical Attouch--Th\'{e}ra duality \cite{attouch1996duality} is a special case of Theorem~\ref{prop:primal_dual_problems} corresponding to the setting where $A=-I$, $B=-I$, and $d_F=d_G=0$. 

Theorem~\ref{prop:primal_dual_problems} also generalizes Fenchel--Rockafellar duality presented in the convex optimization setting (cf.\ Table~\ref{tab:problem_duality}) . If $F=\partial f$ and $G=\partial g$ for $f\in\Gamma_0(\H_x)$ and $g\in\Gamma_0(\H_z)$, and under mild conditions (e.g. \cite[Theorem 2.6.6]{Clarke:4}), problem \eqref{dual_inclusion_problem} generalizes the Fenchel--Rockafellar dual of \eqref{primal_optimisation_problem}. This equivalence follows from the conjugacy relation $\partial h^*=(\partial h)^{-1}$ enjoyed by closed and convex $h$ \cite{bauschke2017monotoneoperators}. Under a standard constraint qualification \cite{rockafellar1970convexanalysis}, e.g. $0\in \mathrm{ri}(A(\dom f)+B(\dom g)+d_F+d_G)$, \eqref{primal_inclusion_problem} generalizes problem \eqref{primal_optimisation_problem}. This follows from the well--known fact that any stationary point $(x^*,z^*)$ is a solution to \eqref{primal_optimisation_problem}, and any solution to \eqref{primal_optimisation_problem} is necessarily a stationary point in this setting.

Problem \eqref{primal_optimisation_problem} is often addressed using methods based on Lagrangian duality, including the classical method of multipliers and ADMM.
ADMM is a standout example of the specific class of operator splitting methods we consider herein.

\begin{definition}[Alternating primal--dual class]
Let $\rho>0$ and $\Phi_x^{\rho},\Phi_z^{\rho}:\H_x\times\H_z\times\H\to(-\infty,+\infty]$.
We say that an algorithm belongs to the \emph{alternating primal--dual class} if it generates a sequence $(x^k,z^k,\lambda^k)$ according to the following updates of Gauss--Seidel style:
\begin{subequations}\label{AMA_expression}
\begin{align}
x^{k+1}&\in\argmin_{x\in\H_x}\quad \Phi_x^{\rho}(x,z^k,\lambda^k),\\
z^{k+1}&\in\argmin_{z\in\H_z}\quad \Phi_z^{\rho}(x^{k+1},z,\lambda^k),\\
\lambda^{k+1}&=\lambda^k+\rho(Ax^{k+1}+Bz^{k+1}+c).\label{APDC_dual_update}
\end{align}
\end{subequations}
\end{definition}
We refer to $(x^k)$ and $(z^k)$ above as our \emph{primal sequences} and $(\lambda^k)$ as a \emph{dual shadow sequence}, the latter nomenclature being standard in operator splitting literature for reasons made clear in section~\ref{sec:Gabay_duality}.
ADMM is recovered with the special choice
$$
\Phi_x^{\rho}(x,z,\lambda)=\Phi^{\rho}_z(x,z,\lambda)=\mathcal{L}_{\rho}(x,z,\lambda)\coloneqq f(x)+g(z)+\langle\lambda,Ax+Bz+c\rangle+\frac{\rho}2\|Ax+Bz+c\|^2.
$$
In this case $\Phi^{\rho}_x$ and $\Phi_z^{\rho}$ are the \emph{augmented Lagrangian} corresponding to \eqref{primal_optimisation_problem}.
AMA is another well--established member of the alternating primal--dual class, in which case $\Phi_x^{\rho}=\mathcal{L}_0$ and $\Phi_z^{\rho}=\mathcal{L}_{\rho}$.
For both of these methods, it is known in the convex setting that the dual update \eqref{APDC_dual_update} is related to a sequence generated via
\begin{equation}
y^{k+1}\in T(y^k)
\label{fixed_point_iteration}
\end{equation}
for some $T:\H\rightrightarrows\H$ such that $y\in\Fix T$ allows us to recover a solution to \eqref{dual_inclusion_problem}.
We refer to methods of style \eqref{fixed_point_iteration} as \emph{fixed--point iterations}.
For example, we will see in section~\ref{sec:DRandADMM} that for ADMM, dual sequence \eqref{APDC_dual_update} corresponds to the \emph{shadow} of \eqref{fixed_point_iteration} when $T$ is the DR operator.
We work to show throughout that when $T$ enjoys certain structure, sequence \eqref{fixed_point_iteration} applied to \eqref{dual_inclusion_problem} naturally induces an alternating primal--dual algorithm for addressing \eqref{primal_inclusion_problem} (or a related problem).
We show that this occurs when $T$ is expressed using resolvent operators, whose definition we now recall.
\begin{definition}[Resolvent]
    Let $H:\H\rightrightarrows \H$.
    The \emph{resolvent} of $H$ is the, generically set--valued and possibly empty, operator $J_{H}\coloneqq (I+H)^{-1}$.
\end{definition}
If $H$ is maximal monotone, it is well known that its resolvent is nonempty, single--valued, and \emph{firmly nonexpansive} \cite{bauschke2017monotoneoperators}: $\|J_{H}x-J_{H}y\|^2\leq \langle x-y,J_{ H}x-J_{H}y\rangle$ for all $x,y\in\dom(H)$.
The latter property is strictly stronger than \emph{nonexpansivity}: $\| J_Hx-J_Hy\| \leq \| x-y \|$ for all $x,y\in\dom(H)$. If $h\in\Gamma_0(\H)$ and $\gamma>0$, $J_{\gamma (\partial h)}$ agrees with the \emph{proximal operator} of $h$:
$$
J_{\gamma (\partial h)}(x)=\prox_{\gamma h}(x)\coloneqq \argmin_{y\in\H} \left\{ h(y)+\frac{1}{2\gamma}\big\|y-x\|^2 \right\}.
$$
Other useful cases where the resolvent is nonempty and single-valued include when it specifies to the proximity operator for a nonconvex sparse regularizer, such as the $\ell_0$ norm, minimax concave penalty (MCP), smoothly clipped absolute deviations (SCAD), or $\ell_p$ norms.
\begin{definition}[Generalized resolvent]\label{def:generalised_resolvents}
    Let $H:\H\rightrightarrows \H$ and $U:\H\rightarrow \H_1$.
    The \emph{generalized resolvent} of $H$ (with respect to $U$) is the mapping 
    $$
    J_{H}^U \coloneqq (U^* U + H)^{-1}.
    $$
\end{definition}
It is likewise straightforward to show that $J_H^U$ is nonempty, single-valued, and firmly nonexpansive whenever $H$ is maximally monotone and $U^*U$ is a bounded linear operator with full domain. The generalized resolvent captures the stationary points of subproblems that frequently appear in alternating primal--dual algorithms.
\begin{proposition}\label{prop:argmin_in_gen_resolvent}
    Let $H=\partial h$ for $h:\H\to(-\infty,+\infty]$ and $U:\H\rightarrow \H_1$. If $h$ is proper and lower semicontinuous, then
    \begin{equation}  \argmin_y\left\{ h(y)+\frac12\|Uy+v\|^2 \right\} \subseteq J_H^U( -U^*v), 
    \label{argmin_problem}
    \end{equation}
    with equality when $h$ is, in addition, convex.
\end{proposition}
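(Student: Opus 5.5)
The plan is to apply Fermat's rule to the objective $\phi(y)\coloneqq h(y)+\frac12\|Uy+v\|^2$ and then rearrange the resulting inclusion into the form $-U^*v\in (U^*U+H)(y)$.

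First, fix $y^\star\in\argmin_y \phi(y)$. Since $y^\star$ is a global minimizer, the Fréchet subdifferential satisfies $0\in\partial_{\mathrm{F}}\phi(y^\star)$; this follows directly from the definition, because the liminf quotient is nonnegative when $\phi(u)\geq\phi(y^\star)$. The quadratic $q(y)\coloneqq\frac12\|Uy+v\|^2$ is continuously (Fréchet) differentiable, with $\nabla q(y)=U^*(Uy+v)=U^*Uy+U^*v$.

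Second, I will invoke the exact sum rule for a smooth summand, $\partial_{\mathrm{F}}(h+q)(y)=\partial_{\mathrm{F}}h(y)+\nabla q(y)$. This step is the one point that needs care. It is proved by adding and subtracting the first-order Taylor expansion of $q$ inside the liminf; the $o(\|u-y\|)$ remainder vanishes because $q$ is differentiable, and only the inclusion "$\subseteq$" is needed. From it we get $-U^*Uy^\star-U^*v\in\partial_{\mathrm{F}}h(y^\star)$.

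Third, I pass from the Fréchet to the Clarke subdifferential. Taking constant sequences $x_n=y^\star$ in the definition of $\partial_{\mathrm{M}}$ shows $\partial_{\mathrm{F}}h\subseteq\partial_{\mathrm{M}}h\subseteq\conv\partial_{\mathrm{M}}h=\partial h=H$. Hence $-U^*v\in U^*Uy^\star+H(y^\star)$, which is exactly $y^\star\in (U^*U+H)^{-1}(-U^*v)=J^U_H(-U^*v)$. This proves the inclusion \eqref{argmin_problem}.

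For equality in the convex case, I reverse the argument. Take $y\in J_H^U(-U^*v)$, so that $0\in\partial h(y)+U^*Uy+U^*v$. For $h\in\Gamma_0(\H)$ all the subdifferentials coincide with the convex one. Since $q$ is convex, finite and continuous everywhere, the convex sum rule (Moreau--Rockafellar) gives $\partial h(y)+\nabla q(y)=\partial\phi(y)$; alternatively, one can simply add the subgradient inequality for $h$ to the gradient inequality for the convex $q$. Either way $0\in\partial\phi(y)$, and for the convex function $\phi$ this means $y$ is a global minimizer, which gives the reverse inclusion.

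The only nonroutine points are the smooth sum rule and the chain of inclusions $\partial_{\mathrm{F}}\subseteq\partial$. Neither is deep, but both must be checked against the paper's definitions rather than assumed.
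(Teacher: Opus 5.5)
Your proof is correct and follows the same route as the paper: Fermat's rule for $h+\frac12\|U\cdot+v\|^2$, the exact sum rule with the $C^1$ quadratic, then $\partial_{\mathrm{M}}h\subseteq\conv\partial_{\mathrm{M}}h=\partial h$. The one difference is that you work at the Fr\'{e}chet level, where Fermat's rule and the sum rule follow directly from the definition in any Hilbert space, whereas the paper cites Rockafellar--Wets' Theorem~10.1 and Mordukhovich's Proposition~1.107 for the limiting subdifferential; you also prove the reverse inclusion in the convex case by adding the subgradient inequality for $h$ to the gradient inequality for the quadratic, a step the paper states but does not argue.
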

\begin{proof}
    Suppose $y$ is a global minimizer of $y\mapsto h(y)+\frac12\|Uy+v\|^2$, from which it follows that it is necessarily also a local minimizer.
    We then have by \cite[Theorem~10.1]{rock:6} that $0 \in \partial_M \left(h(\cdot) + \frac{1}{2}\|U(\cdot)+v\|^2\right)(y)$.
    Since $U$ is bounded and $\frac12\|U(\cdot)+v\|^2$ is $C^1$, we have that $ \partial_M \left(h(\cdot) + \frac12\|U(\cdot)+v\|^2 \right)(y) = \partial_M h(y) + U^*(Uy+v)$ \cite[Proposition~1.107]{M06a}.
    Then, from the fact that $\partial_M h(y) \subseteq \operatorname{conv} \partial_M h(y) =\partial h(y)$, it follows that $0 \in \partial h(y) + U^*(Uy+v).$ Upon rearrangement, we obtain $y \in (U^*U + \partial h)^{-1}(-U^*v)$, as claimed.
\end{proof}

\begin{remark}\label{rem:gen_resolvent_is_argmin}
Even when $h$ is possibly nonconvex, authors will often, by a slight abuse of notation, use the left hand side's `argmin' representation in \eqref{argmin_problem} to represent generalized resolvent updates of form $J_H^U(-U^*v)$. We will adopt this convention in sections \ref{sec:Gabay_duality} and \ref{sec:PBBS}.
\end{remark}

\section{Gabay duality}\label{sec:Gabay_duality}
The algorithmic duality framework we propose relies upon a connection between the \emph{dual resolvent} of a Gabay dual operator $\smash{J_{\gamma D_H}}$ and the generalized resolvent $\smash{J^M_{\gamma^{-1}H}}$. This connection is maximally general and subsumes Moreau's identity in the operator setting.
\begin{theorem}[Complete Moreau identity]\label{thm:Moreaus_identity}
    Let $H:\H_1\rightrightarrows\H_1$, $M:\H_1\rightarrow\H$ be linear, $d\in\H$ and $\gamma >0$.
    Then
    \begin{equation}\tag{CMI}
    J_{\gamma D_H}(u) = u + \gamma d + \gamma M J^M_{\gamma^{-1} H}(-M^*(\gamma^{-1}u +d))
    \qquad\forall u\in\H.
    \label{CMI}
    \end{equation}
    This identity holds even when the operators $J_{\gamma D_H}$ and $J^M_{\gamma^{-1} H}$ do not have full domain, i.e., may have empty images, wherefore $J_{\gamma D_H}$ has full domain whenever the primal generalized resolvent $J_{\gamma^{-1}H}^M$ does.
    Furthermore, if $\lambda\in J_{\gamma D_H}(u)$, there exists some $q\in \H$ such that the following hold:
    \begin{enumerate}[label=(\roman*)]
        \item $q\in H^{-1}(-M^*\lambda)$ and $u-\lambda = -\gamma(Mq+d)$; \label{CMI_item1}
        \item $q\in J^M_{\gamma^{-1} H}(-M^*(\gamma^{-1}u+d))$. \label{CMI_item2}
    \end{enumerate}
\end{theorem}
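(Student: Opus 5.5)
The proof will be a direct set-valued chase through the graph of $J_{\gamma D_H}$. Both sides of \eqref{CMI} are treated as subsets of $\H$, and I will prove the two inclusions separately. Since the argument uses only the definitions of inverse and composition of set-valued maps, it automatically covers points where either side is empty. The claim about full domain then follows at once: if $J^M_{\gamma^{-1}H}(-M^*(\gamma^{-1}u+d))\neq\emptyset$ for every $u$, then the right-hand side of \eqref{CMI} is nonempty for every $u$.

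\textbf{Inclusion ``$\subseteq$''.} Fix $\lambda\in J_{\gamma D_H}(u)$. This means $u\in\lambda+\gamma D_H(\lambda)$, i.e. $u-\lambda\in\gamma D_H(\lambda)$. Unfolding Definition~\ref{def:dual_operator} as in the proof of Theorem~\ref{prop:primal_dual_problems}, we have $D_H(\lambda)=\{-Mq-d: q\in H^{-1}(-M^*\lambda)\}$. Hence there is some $q$ with $-M^*\lambda\in H(q)$ and $u-\lambda=-\gamma(Mq+d)$, which is exactly item~\ref{CMI_item1}. Next I eliminate $\lambda$. From item~\ref{CMI_item1},
$$\lambda=u+\gamma(Mq+d)\quad\text{so}\quad -M^*\lambda=-M^*u-\gamma M^*Mq-\gamma M^*d\in H(q).$$
Dividing by $\gamma>0$ and rearranging gives
$$-M^*(\gamma^{-1}u+d)\in M^*Mq+\gamma^{-1}H(q)=(M^*M+\gamma^{-1}H)(q).$$
Thus $q\in J^M_{\gamma^{-1}H}(-M^*(\gamma^{-1}u+d))$, which is item~\ref{CMI_item2}. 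Substituting back, $\lambda=u+\gamma d+\gamma Mq$ lies in the right-hand side of \eqref{CMI}.

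\textbf{Inclusion ``$\supseteq$''.} Run the same chain in reverse. Take $q\in J^M_{\gamma^{-1}H}(-M^*(\gamma^{-1}u+d))$ and set $\lambda:=u+\gamma d+\gamma Mq$. Every step above was an equivalence: multiplying by $\gamma$, moving the single-valued linear term $M^*Mq$ across, and using the definition of $H^{-1}$. So we recover $-M^*\lambda\in H(q)$, hence $-Mq-d\in D_H(\lambda)$. Finally $u-\lambda=-\gamma(Mq+d)\in\gamma D_H(\lambda)$, i.e. $\lambda\in J_{\gamma D_H}(u)$.

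\textbf{Main obstacle.} The only delicate point is the manipulation of set-valued sums. The step $x\in M^*Mq+\gamma^{-1}H(q)\iff \gamma x-\gamma M^*Mq\in H(q)$ is valid because $M^*Mq$ is a single point, so translating a set by it is invertible. The same is true of scaling by $\gamma^{-1}>0$. No cancellation of genuinely set-valued terms ever occurs, which is why no monotonicity or injectivity of $M$ is needed. I will state this explicitly when writing out the reverse direction, so that the equality of sets in \eqref{CMI}, including empty images, is justified.
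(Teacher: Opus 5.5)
Your proposal is correct and takes essentially the same route as the paper: both prove the two inclusions by unfolding $D_H(\lambda)=\{-Mq-d : q\in H^{-1}(-M^*\lambda)\}$, read off item (i) directly, derive item (ii) for the same $q$ using only translation by the single point $M^*Mq$ and positive scaling, and then run the chain backwards for $\supseteq$. Substituting $\lambda=u+\gamma(Mq+d)$ directly into $-M^*\lambda\in H(q)$ is just a compressed form of the paper's chain of equivalences, and your one-line justification of the full-domain claim spells out a step the paper states without proof.
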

We say $J_{\gamma D_H}$ is the \textit{Gabay dual resolvent} to the generalized resolvent $J_{\gamma^{-1}H}^M$ with translation $d$.
\begin{proof}
    The proof relies on straightforward manipulations of set--valued operators and linear maps.
    To show the result, it suffices to fix any $u \in \H$ and show the set equality:
    $$
    (I+\gamma D_H)^{-1}(u) = u + \gamma d + \gamma M(M^*M+\gamma^{-1}H)^{-1}(-M^*(\gamma^{-1} u +d)).
    $$
    This is what we will do.
    
    ($\subseteq$).
    The result is vacuous if $J_{\gamma D_H}(u) = \emptyset$.
    Let $\lambda \in J_{\gamma D_H}(u)$, which implies $u=\gamma p+\lambda$ for some $p \in D_H(\lambda)$.
    Furthermore, $p\in D_H(\lambda)$ implies the existence of $q \in H^{-1}(-M^*\lambda)$ such that $p=-Mq-d$.
    This shows \ref{CMI_item1}.
    It follows that
    \begin{align*}
        q \in H^{-1}(-M^*\lambda) \quad\iff& \quad Hq \ni-M^* \lambda \\
        \iff& \quad (M^*M+\gamma^{-1}H)q \ni M^*Mq -\gamma^{-1}M^* \lambda\\
        \iff& \quad q \in (M^*M+\gamma^{-1}H)^{-1}(-M^*(\gamma^{-1}\lambda-Mq))\\
        \iff& \quad q\in (M^*M+\gamma^{-1}H)^{-1}(-M^*(\gamma^{-1}\lambda+p+d))\\
        \iff& \quad q\in (M^*M+\gamma^{-1}H)^{-1}(-M^*(\gamma^{-1}\lambda+\gamma^{-1}(u-\lambda)+d))\\
        \iff& \quad q\in (M^*M+\gamma^{-1}H)^{-1}(-M^*(\gamma^{-1}u+d)),
    \end{align*}
    with the final line showing \ref{CMI_item2}. Altogether we have
    \begin{align*}
        \lambda &= u - \gamma p = u-\gamma(-Mq-d) \in u + \gamma d+\gamma M(M^*M+\gamma^{-1}H)^{-1}(-M^*(\gamma^{-1}u+d)).
    \end{align*}
    This completes the proof of ($\subseteq$).

    ($\supseteq$).
    If $u+\gamma d+\gamma M(M^*M+\gamma^{-1}H)^{-1}(-M^*(\gamma^{-1}u+d))=\emptyset$, then the result is vacuous.
    Otherwise, there exists $q \in (M^*M+\gamma^{-1}H)^{-1}(-M^*(\gamma^{-1}u+d))$. Thus:
    \begin{align*}
        q \in (M^*M+\gamma^{-1}&H)^{-1}(-M^*(\gamma^{-1}u+d)) \\
        \iff& \quad  \gamma^{-1}Hq + M^*Mq \ni -M^*(\gamma^{-1}u+d)\\
        \iff&\quad Hq\ni -M^*(u+\gamma d)-\gamma M^*Mq\\
        \iff&\quad  q \in H^{-1}(-M^*(u+\gamma d+\gamma Mq))\\
        \iff&\quad -\gamma Mq-\gamma d \in -\gamma M(H^{-1}(-M^*(u+\gamma d+\gamma Mq)))-\gamma d\\
        \iff& \quad  -\gamma Mq-\gamma d\in \gamma D_H(u+\gamma d+\gamma Mq)\\
        \iff& \quad -\gamma Mq-\gamma d + (u+\gamma d+\gamma Mq) \in (I+\gamma D_H)(u+\gamma d+\gamma Mq)\\
        \iff&\quad u\in (I+\gamma D_H)(u+\gamma d+\gamma Mq)\\
        \iff& \quad J_{\gamma D_H}(u) \ni u+\gamma d+\gamma Mq.
    \end{align*}
    This establishes the direction $(\supseteq)$, concluding the proof.
\end{proof}
\begin{remark}
    Specifying $M=-I$, $d=0$ and $H = \partial h$ for $h\in\Gamma_0(\H)$, one sees that \eqref{CMI} is a completion of Moreau's proximal identity \cite{Moreau1965}: 
    \begin{equation}\label{Moreau_identity}
    \prox_{\gamma h^*}(u) =u - \gamma \prox_{\gamma^{-1}h}(\gamma^{-1}u).
    \end{equation}
\end{remark}

We will see that items \ref{CMI_item1} and \ref{CMI_item2} provide a principled way of deriving algorithmic dualities.
More precisely, Theorem~\ref{thm:Moreaus_identity}\,\ref{CMI_item1} yields the linear relationships linking primal and dual sequences.
We refer to these relationships as \emph{Gabay identities}.
The fact that these sequences belong to generalized resolvents and hence correspond to stationary points to certain subproblems then follows from Theorem~\ref{thm:Moreaus_identity}\,\ref{CMI_item2} and Proposition~\ref{prop:argmin_in_gen_resolvent}.
We illustrate this general framework in the following section by applying it to establish the general connection between FBS and AMA.  

\subsection{A general Gabay duality for AMA}\label{sec:FBSandAMA}
We first use Gabay duality to establish the general relationship between AMA and FBS. FBS addresses \eqref{dual_inclusion_problem} according to 
\begin{equation}
\lambda^{k+1}=J_{\rho D_G}(\lambda^k-\rho D_F(\lambda^k)).
\label{FBS}
\end{equation}

\begin{theorem}[Equivalence of FBS and AMA]\label{thm:AMA_duality}
Let $F:\H_x\rightrightarrows\H_x$ and $G:\H_z\rightrightarrows\H_z$. Let $A:\H_x\to\H$ and $B:\H_z\to\H$ be linear and $d\in\H$. Then, addressing \eqref{dual_inclusion_problem} with \eqref{FBS} induces the sequence $(x^k,z^k,\lambda^k)$ such that
\begin{subequations}\label{AMA_general} 
\begin{align}
x^{k+1}&\in F^{-1}(-A^*\lambda^k),\label{gen_AMA_x_update}\\
z^{k+1}&\in J^B_{\rho^{-1}G}(-B^*(Ax^{k+1}+d+\rho^{-1}\lambda^k)),\label{gen_AMA_z_update}\\
\lambda^{k+1}&=\lambda^k+\rho (Ax^{k+1}+Bz^{k+1}+d).\label{gen_AMA_dual_update}
\end{align}
\end{subequations}

\end{theorem}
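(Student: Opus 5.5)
The plan is to unpack the two operators in the FBS step \eqref{FBS} one at a time: the forward step will produce $x^{k+1}$, and the Complete Moreau identity (Theorem~\ref{thm:Moreaus_identity}) applied to the backward step will produce $z^{k+1}$ together with the linear dual update. Throughout, \eqref{FBS} is read as a selection: $\lambda^{k+1}\in J_{\rho D_G}(\lambda^k-\rho p^k)$ for some $p^k\in D_F(\lambda^k)$.

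\textbf{Forward step.} As in the proof of Theorem~\ref{prop:primal_dual_problems}, we have $D_F(\lambda)=\{-Ax-d_F : x\in F^{-1}(-A^*\lambda)\}$. Hence any choice $p^k\in D_F(\lambda^k)$ is of the form $p^k=-Ax^{k+1}-d_F$ for some $x^{k+1}\in F^{-1}(-A^*\lambda^k)$. This is exactly \eqref{gen_AMA_x_update}. Setting $u^k\coloneqq \lambda^k-\rho p^k=\lambda^k+\rho(Ax^{k+1}+d_F)$, the iteration becomes $\lambda^{k+1}\in J_{\rho D_G}(u^k)$.

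\textbf{Backward step.} Apply Theorem~\ref{thm:Moreaus_identity} with $H=G$, $M=B$, translation $d_G$, $\gamma=\rho$ and $u=u^k$. Item~\ref{CMI_item1} gives a point $z^{k+1}\in G^{-1}(-B^*\lambda^{k+1})$ with $u^k-\lambda^{k+1}=-\rho(Bz^{k+1}+d_G)$. Rearranging and using $d_F+d_G=d$ yields
\[
\lambda^{k+1}=\lambda^k+\rho(Ax^{k+1}+d_F)+\rho(Bz^{k+1}+d_G)=\lambda^k+\rho(Ax^{k+1}+Bz^{k+1}+d),
\]
which is \eqref{gen_AMA_dual_update}. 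Item~\ref{CMI_item2} gives $z^{k+1}\in J^B_{\rho^{-1}G}(-B^*(\rho^{-1}u^k+d_G))$. Substituting $u^k$, the argument equals $-B^*(\rho^{-1}\lambda^k+Ax^{k+1}+d_F+d_G)=-B^*(Ax^{k+1}+d+\rho^{-1}\lambda^k)$, which is \eqref{gen_AMA_z_update}.

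\textbf{Main obstacle.} There is no analytic difficulty, since everything is algebraic. The only delicate points are bookkeeping ones. First, the set-valued forward step must be read as a selection, so that the same $x^{k+1}$ is used in both $u^k$ and \eqref{gen_AMA_z_update}. Second, the dual operator $D_G$ in \eqref{FBS} must be the Gabay dual of $G$ with respect to $B$ with translation $d_G$, so that the two translations combine into $d$. If the iteration is empty at some step, the statement is vacuous there, by the empty-image clause of Theorem~\ref{thm:Moreaus_identity}.
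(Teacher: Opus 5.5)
Your proposal is correct and follows essentially the same route as the paper. You select $x^{k+1}\in F^{-1}(-A^*\lambda^k)$ from the forward step, then apply items \ref{CMI_item1} and \ref{CMI_item2} of Theorem~\ref{thm:Moreaus_identity} to the backward step to obtain $z^{k+1}$, the $z$-update, and, by rearrangement, the dual update. The only difference is that you carry a general split $d_F+d_G=d$, whereas the paper's proof implicitly takes $d_F=0$ and $d_G=d$; your bookkeeping shows the two translations combine correctly into $d$.
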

\begin{proof}
Let $\lambda^{k+1}\in J_{\rho D_G}(\lambda^k-\rho D_F(\lambda^k))$. From the definition of $D_F$,there necessarily exists some $x^{k+1}\in F^{-1}(-A^*\lambda^k)$ such that $\lambda^{k+1}\in J_{\rho D_G}(\lambda^k+\rho Ax^{k+1})$. By Theorem~\ref{thm:Moreaus_identity}\,\ref{CMI_item1}, there exists $z^{k+1}\in G^{-1}(-B^*\lambda^{k+1})$ such that 
\begin{equation}
\lambda^k+\rho Ax^{k+1}-\lambda^{k+1}=-\rho(Bz^{k+1}+d).
\label{alg_equiv_eq1}
\end{equation}
We have by Theorem~\ref{thm:Moreaus_identity}\,\ref{CMI_item2}
that $z^{k+1}$ also satisfies

$$
z^{k+1}\in J^B_{\rho^{-1}G}(-B^*(\rho^{-1}\lambda^k+Ax^{k+1}+d)).
$$
Simply rearranging \eqref{alg_equiv_eq1} shows \eqref{gen_AMA_dual_update}.
\end{proof}

\begin{remark}
If $F=\partial g$ and $G=\partial g$, updates \eqref{AMA_general} may be expressed as
\begin{subequations}\label{AMA}
\begin{align}
x^{k+1}&\in\argmin_{x\in\H_x}\quad f(x)+\langle\lambda^k,Ax\rangle,\label{AMA_x_update}\\
z^{k+1}&\in\argmin_{z\in\H_z}\quad g(z)+\frac{\rho}{2}\left\|Ax^{k+1}+Bz+\rho^{-1}\lambda^k+d\right\|^2,\label{AMA_z_update}\\
\lambda^{k+1}&=\lambda^k+\rho\left( Ax^{k+1}+Bz^{k+1}+d\right).\label{AMA_dual_update} 
\end{align}
\end{subequations}
Method \eqref{AMA} is precisely AMA addressing \eqref{primal_optimisation_problem}, where the `argmin' is in the sense of Remark~\ref{rem:gen_resolvent_is_argmin}.
\end{remark}

\subsection{A general Gabay duality for ADMM}\label{sec:DRandADMM}

DR addresses \eqref{dual_inclusion_problem} under the typical assumption that $D_F,D_G:\H\rightrightarrows\H$ and $D_F+D_G$ are maximal monotone \cite{LionsMercier}.
The method generates a $(\lambda^k)$ sequence according to 
\begin{subequations}
\begin{align}
\lambda^{k+1}&\in J_{\rho D_G}(y^k),\label{dr_lambda_update}\\
\mu^{k+1}&\in J_{\rho D_F}(2\lambda^{k+1}-y^k),\label{dr_mu_update}\\
y^{k+1}&=y^k+\mu^{k+1}-\lambda^{k+1}.\label{dr_y_update}
\end{align}
\label{DRM}
\end{subequations}
Updates \eqref{DRM} can be expressed as a fixed point iteration of form \eqref{fixed_point_iteration} with
\begin{equation}
T=J_{\rho D_G}\circ (2J_{\rho D_F}-I)+I-J_{\rho D_F}.
\end{equation}

It is common in practice to attempt to solve \eqref{primal_optimisation_problem} by approaching \eqref{dual_inclusion_problem} with DR.
This is less common if \eqref{primal_optimisation_problem} is nonconvex, since solution equivalence may no longer hold and DR often lacks convergence guarantees.
It is known in the convex setting that the DR sequence $(\lambda^k,\mu^k,y^k)$ is linked to the ADMM sequence $(x^k,z^k,\lambda^k)$ by the Gabay identites \eqref{GI1}--\eqref{GI4}, see \cite{lindstrom2022computable}.
We show that these identities also hold in general in the following lemma. 
\begin{lemma}[ADMM and DR Gabay identities]\label{lem:Gabay_identities}
    Let $F:\H\rightrightarrows\H_x$ and $G:\H_z\rightrightarrows\H_z$.
    Let $A:\H_x\to\H$ and $B:\H_z\to\H$ be linear and $d_F,d_G\in\H$.
    Then updates \eqref{DRM} for \eqref{dual_inclusion_problem} induce the sequence $(x^k,z^k,\lambda^k)$ such that the following identities hold for all $k\geq 0$. 
    \begin{align}
    \lambda^{k+1}-y^k&=\rho(Bz^{k+1}+d_G),\label{GI1}\tag{G1}\\
    2\lambda^{k+1}-y^k&=\lambda^{k+1}+\rho(Bz^{k+1}+d_G),\label{GI2}\tag{G2}\\
    \mu^k-\lambda^k&=\rho(Ax^{k+1}+Bz^k+d_F+d_G),\label{GI3}\tag{G3}\\
    y^k-\lambda^k&=\rho(Ax^{k+1}+d_F).\label{GI4}\tag{G4}
    \end{align}
\end{lemma}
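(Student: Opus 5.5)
We will apply the complete Moreau identity (Theorem~\ref{thm:Moreaus_identity}) twice per iteration, once to each resolvent step of \eqref{DRM}. Both applications use item~\ref{CMI_item1} to extract a primal variable and the corresponding linear relation. The primal sequences $(x^k)$ and $(z^k)$ are defined along the way as these extracted points. Proceeding exactly as in the proof of Theorem~\ref{thm:AMA_duality}, we read (G1)--(G4) off these relations and then combine them algebraically.

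\emph{Step 1: the $\lambda$-update.} Since $\lambda^{k+1}\in J_{\rho D_G}(y^k)$, Theorem~\ref{thm:Moreaus_identity}\,\ref{CMI_item1} with $H=G$, $M=B$, $d=d_G$, $\gamma=\rho$ and $u=y^k$ provides $z^{k+1}\in G^{-1}(-B^*\lambda^{k+1})$ such that
\[
y^k-\lambda^{k+1}=-\rho(Bz^{k+1}+d_G).
\]
This is exactly \eqref{GI1}. Adding $\lambda^{k+1}$ to both sides of \eqref{GI1} gives \eqref{GI2} immediately. By item~\ref{CMI_item2}, $z^{k+1}$ is also a generalized resolvent point, which will be needed later for the ADMM interpretation but not for the identities themselves.

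\emph{Step 2: the $\mu$-update.} Since $\mu^{k+1}\in J_{\rho D_F}(2\lambda^{k+1}-y^k)$, the same theorem with $H=F$, $M=A$, $d=d_F$ gives a point $\tilde x\in F^{-1}(-A^*\mu^{k+1})$ such that
\[
2\lambda^{k+1}-y^k-\mu^{k+1}=-\rho(A\tilde x+d_F).
\]
The indexing requires care here. Identities \eqref{GI3} and \eqref{GI4} pair $\mu^k$ and $y^k$ with $x^{k+1}$, so we define $x^{k+1}:=\tilde x$ where $\tilde x$ is obtained from the $\mu^k$-step, that is, from $\mu^k\in J_{\rho D_F}(2\lambda^k-y^{k-1})$. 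Using \eqref{dr_y_update}, $y^k=y^{k-1}+\mu^k-\lambda^k$. Substituting into the relation above (with indices shifted by one) yields
\[
y^k-\lambda^k=\rho(Ax^{k+1}+d_F),
\]
which is \eqref{GI4}. Adding \eqref{GI4} to \eqref{GI1} at index $k-1$, namely $\lambda^k-y^{k-1}=\rho(Bz^k+d_G)$, and using $\mu^k-\lambda^k=y^k-y^{k-1}$ gives \eqref{GI3}.

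\emph{Main obstacle.} The delicate point is the base case $k=0$. Identities \eqref{GI3} and \eqref{GI4} refer to $\mu^0$ and $x^1$, and \eqref{GI3} also refers to $z^0$, none of which are produced by \eqref{DRM}. We plan to handle this by a convention on initialization. Choose $x^1$ in the preimage defining the $F$-side of \eqref{CMI} so that \eqref{GI4} holds at $k=0$; this amounts to interpreting $y^0$ as the output of a fictitious previous DR step, or equivalently to initializing ADMM with $(x^1,\lambda^0)$. Then set $z^0$ and $\mu^0$ by \eqref{GI1} and \eqref{GI3} at index $-1$. With this convention, the identities hold for all $k\ge0$ by the purely algebraic argument above. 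No monotonicity or single-valuedness is used: every step only selects elements from the sets guaranteed by Theorem~\ref{thm:Moreaus_identity}.
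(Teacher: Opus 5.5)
Your proof is correct and is essentially the paper's argument: you apply Theorem~\ref{thm:Moreaus_identity}\,\ref{CMI_item1} once to each resolvent step and then do linear bookkeeping. The only difference is the order: you get \eqref{GI4} directly from the $\mu$-relation and \eqref{dr_y_update} and then deduce \eqref{GI3}, whereas the paper substitutes \eqref{GI2} to get \eqref{GI3} first and then obtains \eqref{GI4} from \eqref{GI1}, \eqref{GI3} and \eqref{dr_y_update}. The paper also proves \eqref{GI3}--\eqref{GI4} only at shifted indices and leaves the $k=0$ case to an implicit initialization convention, so your explicit base-case convention is a refinement rather than a gap; note that $\lambda^0$ is also a free choice there, and $\mu^0$ is fixed by \eqref{GI3} at index $0$, not $-1$.
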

\begin{proof}
    By Theorem~\ref{thm:Moreaus_identity}\,\ref{CMI_item1}, we immediately see from \eqref{dr_lambda_update} that there exists some $z^{k+1}\in G^{-1}(-B^*\lambda^{k+1})$ satisfying \eqref{GI1}.
    Adding $\lambda^{k+1}$ to both sides of \eqref{GI1} yields \eqref{GI2}.
    Similarly applying Theorem~\ref{thm:Moreaus_identity}\,\ref{CMI_item1} to \eqref{dr_mu_update} yields the existence of $x^{k+2}\in F^{-1}(-A^*\mu^{k+1})$ such that 
    \begin{equation}
    2\lambda^{k+1}-y^k-\mu^{k+1}=-\rho(Ax^{k+2}+d_F).
    \label{almost_GI3}
    \end{equation}
    Substituting \eqref{GI2} into \eqref{almost_GI3} yields \eqref{GI3}.
    Finally, substituting \eqref{GI1} and \eqref{GI4} into \eqref{dr_y_update} yields
    \begin{multline*}
    y^{k+1}=\lambda^{k+1}-\rho(Bz^{k+1}+d_G)+\lambda^{k+1}+\rho(Ax^{k+2}+Bz^{k+1}+d_F+d_G)-\lambda^{k+1}\\
    \iff\quad y^k=\lambda^k+\rho(Ax^{k+1}+d_F).
    \end{multline*}
\end{proof}

We extend the known ADMM and DR duality in Theorem~\ref{thm:ADMM_duality}, wherein we show that DR applied to \eqref{dual_inclusion_problem} induces a primal algorithm akin to ADMM for \eqref{primal_inclusion_problem}.

\begin{theorem}[Equivalence of DR and ADMM]\label{thm:ADMM_duality}
    Let $F:\H_x\rightrightarrows\H_x$ and $G:\H_z\rightrightarrows\H_z$.
    Let $A:\H_x\rightarrow \H$ and $B:\H_z\rightarrow\H$ be linear and $d_F,d_G \in\H$.
    The DR updates \eqref{DRM} induce a sequence $(x^k,z^k,\lambda^k)$ satisfying
    \begin{subequations}
    \label{primal_alg_updates}
    \begin{align}
            x^{k+1}&\in J^A_{\rho^{-1}F}\left( -A^*(Bz^k+\rho^{-1}\lambda^k+d_{F}+d_{G})\right) , \label{primal_alg_x_update}\\
            z^{k+1}&\in J^B_{\rho^{-1}G}\left( -B^*(Ax^{k+1}+\rho^{-1}\lambda^k+d_{F}+d_{G})\right) , \label{primal_alg_z_update}\\
            \lambda^{k+1}&=\lambda^k+\rho(Ax^{k+1}+Bz^{k+1}+d_{F}+d_{G}).\label{primal_alg_dual_update}
        \end{align}
    \end{subequations}
\end{theorem}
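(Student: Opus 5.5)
We take $(x^k,z^k)$ to be the primal sequences already built in Lemma~\ref{lem:Gabay_identities} from the DR iterates, and check the three updates \eqref{primal_alg_x_update}--\eqref{primal_alg_dual_update} one by one. Recall how they arise. Theorem~\ref{thm:Moreaus_identity}\,\ref{CMI_item1} applied to \eqref{dr_lambda_update} gives $z^{k+1}\in G^{-1}(-B^*\lambda^{k+1})$. Applied to \eqref{dr_mu_update}, it gives $x^{k+2}\in F^{-1}(-A^*\mu^{k+1})$. In both cases the linear relations \eqref{GI1}--\eqref{GI4} hold. Part \ref{CMI_item2} of the same theorem additionally places these points in the relevant generalized resolvents, evaluated at the DR inputs. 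The remaining work is to rewrite those inputs in primal variables using the Gabay identities.

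\textbf{The $z$-update.} Part \ref{CMI_item2} applied to $\lambda^{k+1}\in J_{\rho D_G}(y^k)$, with $M=B$, $d=d_G$, $\gamma=\rho$, gives
\[
z^{k+1}\in J^B_{\rho^{-1}G}\bigl(-B^*(\rho^{-1}y^k+d_G)\bigr).
\]
By \eqref{GI4}, $\rho^{-1}y^k=\rho^{-1}\lambda^k+Ax^{k+1}+d_F$. Substituting yields exactly \eqref{primal_alg_z_update}.

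\textbf{The $x$-update.} Part \ref{CMI_item2} applied to $\mu^{k}\in J_{\rho D_F}(2\lambda^{k}-y^{k-1})$, with $M=A$, $d=d_F$, gives
\[
x^{k+1}\in J^A_{\rho^{-1}F}\bigl(-A^*(\rho^{-1}(2\lambda^k-y^{k-1})+d_F)\bigr).
\]
By \eqref{GI2}, $\rho^{-1}(2\lambda^k-y^{k-1})=\rho^{-1}\lambda^k+Bz^k+d_G$. This gives \eqref{primal_alg_x_update}.

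\textbf{The dual update.} Subtract \eqref{GI4} from \eqref{GI1}, both with index shifted appropriately:
\[
\lambda^{k+1}-\lambda^k=(\lambda^{k+1}-y^k)+(y^k-\lambda^k)=\rho(Bz^{k+1}+d_G)+\rho(Ax^{k+1}+d_F).
\]
This is \eqref{primal_alg_dual_update}.

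The main obstacle is the indexing. The $x$-update for index $k+1$ comes from the $\mu$-step of the \emph{previous} DR iteration, so it needs $k\ge1$. For $k=0$, $x^1$ must be defined consistently with \eqref{GI4} for $y^0$. I would handle this by setting $\mu^0:=J_{\rho D_F}(2\lambda^0-y^{-1})$ for a suitable virtual initialization, or equivalently by choosing $x^1$ via Theorem~\ref{thm:Moreaus_identity} applied to $J_{\rho D_F}$ at the initial point. This has to match whatever convention Lemma~\ref{lem:Gabay_identities} uses for its base case.

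A second point to verify is that the \emph{same} $z^{k+1}$ and $x^{k+2}$ are used in both parts \ref{CMI_item1} and \ref{CMI_item2}. The theorem supplies a single $q$ satisfying both, so choosing the primal iterates as these $q$'s makes all identities hold simultaneously. Nonemptiness is not an issue: we only assume that the DR iterates exist.
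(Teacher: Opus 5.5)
Your proposal is correct and is essentially the paper's proof: Theorem~\ref{thm:Moreaus_identity}\,\ref{CMI_item2} on the $\lambda$-step combined with \eqref{GI4} gives the $z$-update, the same item on the $\mu$-step combined with \eqref{GI2} and the shift $k\gets k-1$ gives the $x$-update, and \eqref{GI1} and \eqref{GI4} give the dual update (you \emph{add} them, not subtract as your text says). Your remarks about taking a single $q$ for both items of Theorem~\ref{thm:Moreaus_identity}, and about the $k=0$ base case, are sound; the paper leaves both points implicit, the latter by simply asserting the Gabay identities for all $k\geq 0$ in Lemma~\ref{lem:Gabay_identities}.
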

\begin{proof}
From \eqref{dr_lambda_update} and Theorem~\ref{thm:Moreaus_identity}\,\ref{CMI_item2}, we have that there necessarily exists some $z^{k+1}\in G^{-1}(-B^*\lambda^{k+1})$ such that 
\begin{equation}
z^{k+1}\in J_{\rho^{-1}G}^B(-B^*(\rho^{-1}y^k+d_G)).
\label{almost_ADMM_z_update}
\end{equation}
Update \eqref{primal_alg_z_update} follows by substituting \eqref{GI4} into \eqref{almost_ADMM_z_update}. Applying Theorem~\ref{thm:Moreaus_identity}\,\ref{CMI_item2} to \eqref{dr_mu_update} grants the existence of $x^{k+2}\in F^{-1}(-A^*\mu^{k+1})$ such that
\begin{equation}
x^{k+2}\in J_{\rho^{-1}F}^A(-A^*(\rho^{-1}(2\lambda^{k+1}-y^k)+d_F)).
\label{almost_ADMM_x_update}
\end{equation}
Now combining \eqref{almost_ADMM_x_update} with \eqref{GI2} and taking $k\gets k-1$ yields \eqref{primal_alg_x_update}. Update \eqref{primal_alg_dual_update} follows by adding \eqref{GI1} and \eqref{GI4}.
\end{proof}

\begin{remark}
    Supposing that $F$ and $G$ from Theorem~\ref{thm:ADMM_duality} correspond to $\partial f$ and $\partial g$, we have by Proposition~\ref{prop:argmin_in_gen_resolvent} and Remark~\ref{rem:gen_resolvent_is_argmin} that \eqref{primal_alg_updates} agrees with classical ADMM:
    \begin{subequations}\label{ADMM}
    \begin{align}
    x^{k+1}&\in \argmin_{x\in\H_x} \quad f(x)+\frac{\rho}{2}\big\|Ax+Bz^k+\rho^{-1}\lambda^k+d_F+d_G\big\|^2 , \label{ADMM_x_update}
    \\
    z^{k+1}&\in \argmin_{z\in\H_z} \quad g(z)+\frac{\rho}{2}\Big\|Ax^{k+1}+Bz+\rho^{-1}\lambda^k+d_F+d_G\big\|^2, \label{ADMM_z_update}
    \\
    \lambda^{k+1}&=\lambda^k+\rho\left( Ax^{k+1}+Bz^{k+1}+d_F+d_G\right). \label{ADMM_dual_update}
    \end{align}
    \end{subequations}
\end{remark}

Theorem~\ref{thm:ADMM_duality} demonstrates that---provided the updates are defined---Gabay's duality--like relationship always holds, regardless of the properties of $F$, $G$ or the linear maps $A$ and $B$.
Notably, the general connection between DR and ADMM is shown to hold when $A$ and $B$ are not injective.
This allowance is important in stochastic programming, where the linear operators in question (often described implicitly) are generally not injective but consist of blocks of identity and zero matrices that enforce consensus for linked decision variables.

Lemma~\ref{lem:Gabay_identities} reveals that at each iteration $k$, a primal algorithm tuple $(x^k,z^k,\lambda^k)$ is computed at no added cost when DR is applied to \eqref{dual_inclusion_problem}.
This relationship enables methods that extrapolate on the sequence $(y^k)$, such as those in \cite{ScottAndrew,lindstrom2022computable,poon2019trajectory}.
An immediate application of Theorem~\ref{thm:ADMM_duality} is that such extrapolation schemes may now be applied to nonmonotone inclusion problems.

The following corollary links the DR sequences $(\lambda^k,\mu^k)$ with the standard ADMM residuals.
This enables convergence results obtained for the DR sequences to be immediately transferred to ADMM.
\begin{corollary}\label{cor:ADMM_residuals}
Suppose the DR sequences satisfy 
$$
\|\lambda^k-\mu^k\|\to0\quad\mand\quad\|\lambda^{k+1}-\lambda^k\|\to0.
$$
Then the ADMM \emph{primal} and \emph{dual} residuals $r^k= Ax^k+Bz^k+d_F+d_G$ and $s^k= \rho A^*B(z^k-z^{k-1})$ likewise vanish.
\end{corollary}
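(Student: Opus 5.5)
The plan is to express both ADMM residuals directly through the Gabay identities of Lemma~\ref{lem:Gabay_identities}, which hold for all $k\geq 0$ with no assumptions on $F$, $G$, $A$, $B$. Each residual should then be a fixed linear combination of the DR differences $\lambda^k-\mu^k$ and $\lambda^{k+1}-\lambda^k$.

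\emph{Primal residual.} Subtracting \eqref{GI3} from \eqref{GI4} does not directly give $r^k$, because \eqref{GI3} involves $x^{k+1}$ together with $z^k$. I would instead use the dual update \eqref{primal_alg_dual_update} from Theorem~\ref{thm:ADMM_duality}, which was obtained by adding \eqref{GI1} and \eqref{GI4}. With the index shifted, it reads
\[
\lambda^{k}-\lambda^{k-1}=\rho(Ax^{k}+Bz^{k}+d_F+d_G)=\rho r^k .
\]
Hence $\|r^k\|=\rho^{-1}\|\lambda^k-\lambda^{k-1}\|\to0$, using only the second hypothesis.

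\emph{Dual residual.} For $s^k$ I need to control $B(z^k-z^{k-1})$; once that is done, boundedness of $A^*$ finishes the argument, since $\|s^k\|\le\rho\|A^*\|\,\|B(z^k-z^{k-1})\|$. Subtracting \eqref{GI3} at index $k$ from $\rho r^{k+1}=\lambda^{k+1}-\lambda^k$ gives
\[
\rho B(z^{k+1}-z^k)=\lambda^{k+1}-\lambda^k-(\mu^k-\lambda^k)=\lambda^{k+1}-\mu^k .
\]
Writing $\lambda^{k+1}-\mu^k=(\lambda^{k+1}-\lambda^k)+(\lambda^k-\mu^k)$ shows that both pieces vanish by hypothesis. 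Therefore $\|B(z^{k+1}-z^k)\|\to0$, and shifting the index gives $s^k\to0$.

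\emph{Main obstacle.} The hard part is index bookkeeping rather than analysis. The lemma defines $x^{k+1}$ through $\mu^{k}$, so one must check carefully which $z$ and $x$ indices appear in \eqref{GI3} versus \eqref{primal_alg_dual_update}. One must also check that the primal iterates in the corollary are exactly those induced in Theorem~\ref{thm:ADMM_duality}. I would verify the two displayed identities by direct substitution from \eqref{GI1}--\eqref{GI4}, taking care with the shift $k\gets k-1$ used in that theorem.
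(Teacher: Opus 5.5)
Your proof is correct and follows the paper's argument. The primal residual is read off from \eqref{primal_alg_dual_update}, and subtracting \eqref{GI3} from the sum of \eqref{GI1} and \eqref{GI4} is algebraically the same as the paper's step of subtracting the difference \eqref{GI3}$-$\eqref{GI4} from \eqref{GI1}; both give $\lambda^{k+1}-\mu^k=\rho B(z^{k+1}-z^k)$. You also make explicit two things the paper leaves implicit, the splitting $\lambda^{k+1}-\mu^k=(\lambda^{k+1}-\lambda^k)+(\lambda^k-\mu^k)$ and the bound through $\|A^*\|$, and your route avoids the index slip in \eqref{almost_dual_residual}, whose right-hand side should read $\rho(Bz^k+d_G)$.
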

\begin{proof}
It is clear from \eqref{primal_alg_dual_update} that $\rho^{-1}\|\lambda^{k+1}-\lambda^k\|=\|r^{k+1}\|$.
Theorem~\ref{thm:ADMM_duality} and Lemma~\ref{lem:Gabay_identities} combine to show that the DR and ADMM sequences satisfy \eqref{GI1}--\eqref{GI4}. Subtracting \eqref{GI4} from \eqref{GI3} yields
\begin{equation}
\mu^k-y^k=\rho (Bz^{k+1}+d_G).
\label{almost_dual_residual}
\end{equation}
We have from subtracting \eqref{almost_dual_residual} from \eqref{GI1} that 
$$
\lambda^{k+1}-\mu^k=\rho B(z^{k+1}-z^k)\quad\implies\quad A^*(\lambda^{k+1}-\mu^k)=\rho A^*B(z^{k+1}-z^k).
$$
\end{proof}
In light of Corollary~\ref{cor:ADMM_residuals}, one finds that the differences $\|\lambda^k-\mu^k\|$ and $\|\lambda^{k+1}-\lambda^k\|$ vanishing, as is often observed in certain nonconvex settings, imply that the ADMM residuals converge to zero in norm.

\section{A primal--dual approach derived from Gabay duality}\label{sec:PBBS}
We now utilize our Gabay duality framework to derive and analyze a practical optimization approach.
We are motivated by certain statistical and inverse problems where a controlled approximation error to improve numerical tractability and performance is acceptable, if not desirable.
We propose addressing \eqref{primal_optimisation_problem} via the following quadratic relaxation:
\begin{equation}
\minimize_{(x,z)\in\H_x\times\H_z}\quad f(x)+g(z)+\frac{\rho}2\|Ax\|^2\quad\subjectto\quad Ax+Bz+d=0,
\label{PBBS_problem}
\end{equation}
where $A:\H_x\to\H$, $B:\H_z\to\H$ and $d\in\H$.

Problem \eqref{PBBS_problem} often naturally arises in  statistics and machine learning, since it may inherently incorporate Tikhonov regularization, that is, the integration of quadratic function $\frac{\rho}2\|x\|^2$ in the objective to improve problem conditioning \cite{tikhonov1943regularisation}.

We address \eqref{PBBS_problem} using AMA, in which case we show that AMA is dual to backward--backward splitting (BBS).

We exploit this connection to show that the approach of relaxing \eqref{primal_optimisation_problem} with \eqref{PBBS_problem} and applying AMA is provably convergent under standard convexity assumptions.
We refer to this approach as \emph{primal backward--backward splitting} (PBBS).

Importantly, since we impose no assumptions on $A$, the function $f(x)+\frac{\rho}2\|Ax\|^2$ is not necessarily strongly convex.
Our work thus presents a case where AMA may converge without strong convexity, extending the method's known convergence theory \cite{themelis2019ama,tseng1991fbsduality}.

Recalling the notational convention from Remark~\ref{rem:gen_resolvent_is_argmin}, AMA for \eqref{PBBS_problem} reads
\begin{subequations}\label{PBBS}
\begin{align}
x^{k+1}&\in\argmin_{x\in\H_x}\quad f(x)+\frac{\rho}2\|Ax+\rho^{-1}\lambda^k\|^2,\label{PBBS_x_update}\\
z^{k+1}&\in\argmin_{z\in\H_z}\quad g(z)+\frac{\rho}2\|Ax^{k+1}+Bz+d+\rho^{-1}\lambda^k\|^2,\label{PBBS_z_update}\\
\lambda^{k+1}&=\lambda^k+\rho(Ax^{k+1}+Bz^{k+1}+d).\label{PBBS_dual_update}
\end{align}
\end{subequations}

Applying Theorem~\ref{prop:primal_dual_problems}, one sees that the primal inclusion associated with \eqref{PBBS_problem} is the problem of locating $(x,z,\lambda)$ such that
\begin{equation}
0\in
\begin{bmatrix}
(\rho A^{\top}A+\partial f)x+A^*\lambda\\
\partial g(z)+B^*\lambda\\
Ax+Bz+d
\end{bmatrix}.
\label{PBBS_primal_inclusion}
\end{equation}
We now work to obtain the dual inclusion corresponding to \eqref{PBBS_problem}.
We first require the following definition.
\begin{definition}
Let $D:\H\rightrightarrows\H$.
The \emph{Yosida approximation} of $D$ with parameter $\alpha>0$ is given by
$$
D^{\alpha}\coloneqq \alpha^{-1}(I-J_{\alpha D}).
$$
\end{definition}

\begin{proposition}
The dual inclusion associated with \eqref{PBBS_problem} is that of locating $\lambda\in\H$ such that 
\begin{equation}
0\in D_F^{\rho}(\lambda)+D_G(\lambda),
\label{PBBS_dual_inclusion}
\end{equation}
where $D_F(\lambda)=-A(\rho A^*A+\partial f)^{-1}(-A^*\lambda))$ and $D_G(\lambda)=-B(\partial g^*(-B^*\lambda))-d$. 
\end{proposition}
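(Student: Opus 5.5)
The plan is to obtain \eqref{PBBS_dual_inclusion} in two moves. First, apply Theorem~\ref{prop:primal_dual_problems} directly to the primal inclusion \eqref{PBBS_primal_inclusion}. Second, use the complete Moreau identity \eqref{CMI} to recognize the resulting dual operator for the $x$-block as a Yosida approximation.

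\emph{Step 1 (dual of the relaxed primal).} I set $\widetilde F\coloneqq \rho A^*A+\partial f$, $G\coloneqq\partial g$, $d_F\coloneqq 0$ and $d_G\coloneqq d$. Then \eqref{PBBS_primal_inclusion} is exactly \eqref{primal_inclusion_problem} with $F$ replaced by $\widetilde F$. Theorem~\ref{prop:primal_dual_problems} gives that $\lambda$ is associated with a solution of \eqref{PBBS_primal_inclusion} if and only if
\[
0\in D_{\widetilde F}(\lambda)+D_G(\lambda),
\]
where
\[
D_{\widetilde F}(\lambda)=-A(\rho A^*A+\partial f)^{-1}(-A^*\lambda), \qquad D_G(\lambda)=-B(\partial g)^{-1}(-B^*\lambda)-d.
\]
Writing $(\partial g)^{-1}$ as $\partial g^*$ is the conjugacy relation in the convex case, and is to be read as notation for $(\partial g)^{-1}$ otherwise. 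This already yields the form of $D_G$ in the statement.

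\emph{Step 2 (Yosida identification).} Let $\widehat D(\lambda)\coloneqq -A(\partial f)^{-1}(-A^*\lambda)$ denote the Gabay dual of $\partial f$ with respect to $A$ and translation $0$. I will show that $D_{\widetilde F}=\widehat D^{\rho}$; this is the operator the statement writes as $D_F^{\rho}$. I apply \eqref{CMI} with $H=\partial f$, $M=A$, $d=0$ and $\gamma=\rho$:
\[
J_{\rho \widehat D}(u)=u+\rho A\,(A^*A+\rho^{-1}\partial f)^{-1}(-\rho^{-1}A^*u)\qquad\text{for all } u\in\H,
\]
as a set equality. Hence
\[
\widehat D^{\rho}(u)=\rho^{-1}\big(u-J_{\rho\widehat D}(u)\big)=-A\,(A^*A+\rho^{-1}\partial f)^{-1}(-\rho^{-1}A^*u).
\]
Finally, the scaling identity $(A^*A+\rho^{-1}\partial f)^{-1}(w)=(\rho A^*A+\partial f)^{-1}(\rho w)$ holds because $q$ lies in either set if and only if $\rho w\in\rho A^*Aq+\partial f(q)$. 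Applying it with $w=-\rho^{-1}A^*u$ gives $\widehat D^{\rho}(u)=-A(\rho A^*A+\partial f)^{-1}(-A^*u)=D_{\widetilde F}(u)$. Substituting this into Step 1 gives \eqref{PBBS_dual_inclusion}.

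\emph{Main obstacle.} The only delicate point is the set-valued bookkeeping in Step 2. The expression $\rho^{-1}(I-J_{\rho\widehat D})$ must be read as the set $\{\rho^{-1}(u-\lambda):\lambda\in J_{\rho\widehat D}(u)\}$, which may be empty or multivalued. I then need that \eqref{CMI} is an equality of sets rather than a mere inclusion, and Theorem~\ref{thm:Moreaus_identity} provides exactly this with no assumptions. A secondary point is to make explicit which operator ``$D_F$'' carries the Yosida superscript, namely the dual of $\partial f$ and not of $\widetilde F$, so that the identity is not circular.
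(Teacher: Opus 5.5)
Your proposal is correct and follows essentially the same route as the paper: obtain the dual operators from Theorem~\ref{prop:primal_dual_problems} with $d_F=0$, $d_G=d$, then apply \eqref{CMI} with $H=\partial f$, $M=A$, $d=0$, $\gamma=\rho$ to recognize the $x$-block dual operator as a Yosida approximation. Two of your steps are left implicit in the paper's one-line chain, which overloads the symbol $D_F$: the scaling identity $(A^*A+\rho^{-1}\partial f)^{-1}(w)=(\rho A^*A+\partial f)^{-1}(\rho w)$, and the fact that the superscript $\rho$ attaches to the Gabay dual of $\partial f$ rather than to that of $\rho A^*A+\partial f$.
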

\begin{proof}
The claimed dual operators corresponding to \eqref{PBBS_primal_inclusion} are clear from Theorem~\ref{prop:primal_dual_problems}.
Considering $D_F$, we have 
\begin{align*}
D_F(\lambda)=-A(A^*A+\rho^{-1}\partial f)^{-1}(-A^*(\rho^{-1}\lambda))
=\rho^{-1}(\lambda-J_{\rho D_F}(\lambda))
=D^{\rho}_F(\lambda),
\end{align*}
with the second equality following from \eqref{CMI} (Theorem~\ref{thm:Moreaus_identity}).
\end{proof}

A natural method to address \eqref{PBBS_dual_inclusion} is BBS, which updates according to 
\begin{subequations}
\begin{align}
\mu^{k+1}&\in J_{\rho D_F}(\lambda^k),\label{BBS_mu_update}\\
\lambda^{k+1}&\in J_{\rho D_G}(\mu^{k+1}).\label{BBS_lambda_update}
\end{align}
\label{BBS}
\end{subequations}
Updates \eqref{BBS} can be equivalently expressed as $\lambda^{k+1}\in T_{\lambda}(\lambda^k)$ with $T_{\lambda}\coloneqq J_{\rho D_G}\circ J_{\rho D_F}$.
It is also straightforward to see that we have $\mu^{k+1}\in T_{\mu}(\mu^k)$ with $T_{\mu}\coloneqq J_{\rho D_F}\circ J_{\rho D_G}$.
We will see that considering both $(\lambda^k)$ and $(\mu^k)$ sequences will be useful in studying PBBS.
The following theorem shows that AMA applied to \eqref{PBBS_problem} is equivalent to BBS applied to \eqref{PBBS_dual_inclusion}.
\begin{theorem}[Equivalence of BBS and PBBS]\label{thm:PBBS_duality}
Updates \eqref{BBS} induce a primal sequence $(x^k,z^k,\lambda^k)$ satisfying \eqref{PBBS}.
Furthermore, we have that 
$$
x^k\in(\partial f)^{-1}(-A^*\mu^k)\quad\mand\quad z^k\in(\partial g)^{-1}(-B^*\lambda^k)
$$
for all $k$.
\end{theorem}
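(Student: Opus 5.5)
We apply Theorem~\ref{thm:Moreaus_identity} to each of the two backward steps in \eqref{BBS}, as in the proofs of Theorems~\ref{thm:AMA_duality} and \ref{thm:ADMM_duality}. Throughout, $D_F$ is the Gabay dual of $H=\rho A^*A+\partial f$ with respect to $M=A$ and translation $0$. $D_G$ is the Gabay dual of $\partial g$ with respect to $B$ and translation $d$.

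\textbf{The $\mu$-step.} Take \eqref{BBS_mu_update}, $\mu^{k+1}\in J_{\rho D_F}(\lambda^k)$. Theorem~\ref{thm:Moreaus_identity}\,\ref{CMI_item1} gives some $x^{k+1}\in(\rho A^*A+\partial f)^{-1}(-A^*\mu^{k+1})$ with
\[
\lambda^k-\mu^{k+1}=-\rho Ax^{k+1}, \qquad\text{that is,}\qquad \mu^{k+1}=\lambda^k+\rho Ax^{k+1}.
\]
We unfold the membership of $x^{k+1}$. It says $-A^*\mu^{k+1}\in\rho A^*Ax^{k+1}+\partial f(x^{k+1})$. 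Substituting $\mu^{k+1}$ gives
\[
-A^*\lambda^k-\rho A^*Ax^{k+1}\in\rho A^*Ax^{k+1}+\partial f(x^{k+1}).
\]
This has two readings. First, using only the original membership, $-A^*\mu^{k+1}-\rho A^*Ax^{k+1}\in\partial f(x^{k+1})$. Second, using the Gabay identity, $x^{k+1}\in J^A_{\rho^{-1}\partial f}(-A^*(\rho^{-1}\lambda^k))$. The second reading follows more directly from Theorem~\ref{thm:Moreaus_identity}\,\ref{CMI_item2} combined with the rescaling identity $(A^*A+\rho^{-1}(\rho A^*A+\partial f))^{-1}$ used in the preceding proposition.

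Here the bookkeeping needs care, and I expect it to be the main obstacle. The two readings must be reconciled with the claim $x^k\in(\partial f)^{-1}(-A^*\mu^k)$. I would first try to verify this inclusion by combining the Moreau relation with the inclusion $-A^*\lambda^k\in 2\rho A^*Ax^{k+1}+\partial f(x^{k+1})$. If the factor of $2$ persists, the resolution is to use the Yosida structure $D_F^\rho$ explicitly. The $x$-update \eqref{PBBS_x_update} is the stationarity condition $0\in\partial f(x)+\rho A^*(Ax+\rho^{-1}\lambda^k)$. This is exactly $x\in J^A_{\rho^{-1}\partial f}(-A^*\rho^{-1}\lambda^k)$, and the aim is to identify this with the element furnished by Theorem~\ref{thm:Moreaus_identity}\,\ref{CMI_item2} for the operator $\partial f$ with respect to $A$. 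Once that identification is made, the identity $\mu^{k+1}=\lambda^k+\rho Ax^{k+1}$ turns the stationarity condition into $-A^*\mu^{k+1}\in\partial f(x^{k+1})$, which is the claimed inclusion. So the concrete task is to choose the decomposition of $J_{\rho D_F}$ whose Gabay identity reads $\mu^{k+1}=\lambda^k+\rho Ax^{k+1}$ and whose primal element satisfies \eqref{PBBS_x_update}.

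\textbf{The $\lambda$-step.} Take \eqref{BBS_lambda_update}, $\lambda^{k+1}\in J_{\rho D_G}(\mu^{k+1})$. Theorem~\ref{thm:Moreaus_identity}\,\ref{CMI_item1} gives $z^{k+1}\in(\partial g)^{-1}(-B^*\lambda^{k+1})$ with
\[
\mu^{k+1}-\lambda^{k+1}=-\rho(Bz^{k+1}+d).
\]
Theorem~\ref{thm:Moreaus_identity}\,\ref{CMI_item2} gives $z^{k+1}\in J^B_{\rho^{-1}\partial g}(-B^*(\rho^{-1}\mu^{k+1}+d))$.

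\textbf{Combining.} Substituting $\rho^{-1}\mu^{k+1}=\rho^{-1}\lambda^k+Ax^{k+1}$ into the $z$-inclusion yields \eqref{PBBS_z_update}, in the sense of Remark~\ref{rem:gen_resolvent_is_argmin}. Adding the two Gabay identities gives
\[
\lambda^{k+1}=\lambda^k+\rho(Ax^{k+1}+Bz^{k+1}+d),
\]
which is \eqref{PBBS_dual_update}. The inclusions $x^k\in(\partial f)^{-1}(-A^*\mu^k)$ and $z^k\in(\partial g)^{-1}(-B^*\lambda^k)$ are read off from item \ref{CMI_item1} of the two applications, after shifting the index $k+1\to k$.
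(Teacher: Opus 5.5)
Your $\lambda$-step and the final combination match the paper's argument. The $\mu$-step, however, has a genuine gap, and it comes from your choice of $D_F$.

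Suppose $D_F$ in \eqref{BBS_mu_update} is the Gabay dual of $H=\rho A^*A+\partial f$. Then Theorem~\ref{thm:Moreaus_identity}\,\ref{CMI_item1} gives only $x^{k+1}\in(\rho A^*A+\partial f)^{-1}(-A^*\mu^{k+1})$. Item~\ref{CMI_item2} gives $x^{k+1}\in\bigl(A^*A+\rho^{-1}(\rho A^*A+\partial f)\bigr)^{-1}(-\rho^{-1}A^*\lambda^k)=(2A^*A+\rho^{-1}\partial f)^{-1}(-\rho^{-1}A^*\lambda^k)$, which is not $J^A_{\rho^{-1}\partial f}(-\rho^{-1}A^*\lambda^k)$. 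So your ``second reading'' is incorrect, and the factor of $2$ is not a bookkeeping artifact. With that $D_F$, the $x$-step is stationarity for $f(x)+\rho\|Ax+(2\rho)^{-1}\lambda^k\|^2$ rather than \eqref{PBBS_x_update}. Moreover, the claimed inclusion $x^{k+1}\in(\partial f)^{-1}(-A^*\mu^{k+1})$ would force $\rho A^*Ax^{k+1}\in\partial f(x^{k+1})-\partial f(x^{k+1})$; for differentiable $f$ this means $Ax^{k+1}=0$, which fails in general. No ``identification'' of $x^{k+1}$ with the PBBS minimizer can fix this, because under your reading \eqref{BBS} is simply a different iteration. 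For the same reason, your closing claim that $x^k\in(\partial f)^{-1}(-A^*\mu^k)$ is ``read off from item (i)'' contradicts what item (i) actually gives in your setup.

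The missing idea is that the backward--backward scheme itself produces the Yosida regularization. So the operator inside the first resolvent must be the unregularized Gabay dual $D_F=-A\circ(\partial f)^{-1}\circ(-A^*)$ of $\partial f$, with respect to $A$ and translation $0$. To see this, note that $\lambda\in T_\lambda(\lambda)$ iff there is $\mu\in J_{\rho D_F}(\lambda)$ with $\rho^{-1}(\mu-\lambda)\in D_G(\lambda)$. That is, iff $0\in D_F^{\rho}(\lambda)+D_G(\lambda)$, which is \eqref{PBBS_dual_inclusion}.

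The proof of the proposition preceding \eqref{BBS} shows that the Gabay dual of $\rho A^*A+\partial f$ is exactly $D_F^{\rho}$ for this $D_F$. Its statement uses the symbol $D_F$ for both operators, which is likely what misled you. Placing the dual of $\rho A^*A+\partial f$ inside the resolvent therefore regularizes twice.

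You point toward the fix at the end of your $\mu$-step, but you never commit to it. It is not a reconciliation of two elements; it is a correction of which operator $J_{\rho D_F}$ is the resolvent of. Take $H=\partial f$, $M=A$, $d=0$, $\gamma=\rho$:
\begin{itemize}
\item Theorem~\ref{thm:Moreaus_identity}\,\ref{CMI_item1} gives directly $x^{k+1}\in(\partial f)^{-1}(-A^*\mu^{k+1})$ with $\lambda^k-\mu^{k+1}=-\rho Ax^{k+1}$.
\item Item~\ref{CMI_item2} gives $x^{k+1}\in J^A_{\rho^{-1}\partial f}(-\rho^{-1}A^*\lambda^k)$, which is \eqref{PBBS_x_update} in the sense of Remark~\ref{rem:gen_resolvent_is_argmin}.
\end{itemize}
This is the paper's proof. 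With this correction, the rest of your proposal goes through unchanged.
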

\begin{proof}
From Theorem~\ref{thm:Moreaus_identity}\,\ref{CMI_item1} we have that \eqref{BBS_mu_update} induces some $x^{k+1}\in (\partial f)^{-1}(-A^*\mu^{k+1})$ satisfying 
\begin{equation}
\lambda^k-\mu^{k+1}=-\rho Ax^{k+1}.
\label{PBBS_identity}
\end{equation}
From \ref{CMI_item2} we have that 
$$
x^{k+1}\in J^A_{\rho^{-1}\partial f}(-\rho^{-1}A^*\lambda^k)=\argmin_{x\in\R^n}\quad f(x)+\frac{\rho}2\|Ax+\rho^{-1}\lambda^k\|^2,
$$
with the equality following from Proposition~\ref{prop:argmin_in_gen_resolvent}.
Now applying Theorem~\ref{thm:Moreaus_identity}\,\ref{CMI_item2} to \eqref{BBS_lambda_update}, we obtain the existence of some $z^{k+1}\in (\partial g)^{-1}(-B^{\top}\lambda^{k+1})$ such that
\begin{align*}
z^{k+1}&\in J^B_{\rho^{-1}\partial g}(-B^*(\rho^{-1}\mu^{k+1}+d))=J^B_{\rho^{-1}\partial g}(-B^*(\rho^{-1}\lambda^k+Ax^{k+1}+d)).
\end{align*}
The equality above follows from substituting \eqref{PBBS_identity}.
We have from applying Theorem~\ref{thm:Moreaus_identity}\,\ref{CMI_item1} to \eqref{BBS_lambda_update} that
\begin{equation}
\mu^{k+1}-\lambda^{k+1}=-\rho(Bz^{k+1}+d).
\label{PBBS_identity2}
\end{equation}
Adding \eqref{PBBS_identity} and \eqref{PBBS_identity2} yields claimed dual update \eqref{PBBS_dual_update}.
\end{proof}

\subsection{Convergence analysis}
For ease of presentation, we analyze the convergence of PBBS in the setting $\H_x=\R^n$ and $\H_z=\R^m$.
We now utilize Gabay duality to show PBBS sequence convergence in the convex setting.
\begin{assumption}\label{PBBS_assumptions}
Let problem~\eqref{PBBS_problem} satisfy the following.
\begin{enumerate}[label=(\roman*)]
    \item $f\in\Gamma_0(\R^n)$ and $g\in\Gamma_0(\R^m)$;\label{PBBS_function_assumption}
    \item $A\in\R^{r\times n}$, $B\in\R^{r\times m}$ and $d\in\R^r$;\label{PBBS_matrix_assumption}
    \item $\ran(-A^{\top})\cap \dom f^*\neq\emptyset$ and $\ran(-B^{\top})\cap \dom g^*\neq\emptyset$.\label{PBBS_set_assumption}
    \item $\Fix T_{\lambda}$ and $\Fix T_{\mu}$ are nonempty.\label{PBBS_fixed_point_assumption}
\end{enumerate}
\end{assumption}

\begin{proposition}
Let Assumption~\ref{PBBS_assumptions} hold. Then $D_{\partial f}$ and $D_{\partial g}$ are maximal monotone.
\end{proposition}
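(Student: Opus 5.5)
The plan is to realize each Gabay dual operator as the subdifferential of a proper lower semicontinuous convex function on $\R^r$, and then invoke Rockafellar's theorem that such subdifferentials are maximal monotone. I treat $D_{\partial g}$ first; $D_{\partial f}$ follows by the same argument applied to a modified function.

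\textbf{Conjugacy step.} By Assumption~\ref{PBBS_assumptions}\,\ref{PBBS_function_assumption}, $g\in\Gamma_0(\R^m)$, so $(\partial g)^{-1}=\partial g^*$ \cite{bauschke2017monotoneoperators}. Writing $L\coloneqq -B^{\top}$, this gives
$$
D_{\partial g}(\lambda)=-B\,\partial g^*(-B^{\top}\lambda)-d=L^{\top}\partial g^*(L\lambda)-d .
$$
Define $\varphi_g\coloneqq g^*\circ L-\langle d,\cdot\rangle$. It is convex and lower semicontinuous, being a composition of a closed convex function with a linear map plus a linear term. Assumption~\ref{PBBS_assumptions}\,\ref{PBBS_set_assumption} says exactly that $\ran(-B^{\top})\cap\dom g^*\neq\emptyset$, which makes $\varphi_g$ proper. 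Hence $\varphi_g\in\Gamma_0(\R^r)$ and $\partial\varphi_g$ is maximal monotone.

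\textbf{Chain rule step.} The elementary inclusion $L^{\top}\partial g^*(L\lambda)-d\subseteq\partial\varphi_g(\lambda)$ follows directly from the subgradient inequality. Together with the direct computation $\langle L^{\top}(v-v'),\lambda-\lambda'\rangle=\langle v-v',L\lambda-L\lambda'\rangle\ge0$ for $v\in\partial g^*(L\lambda)$ and $v'\in\partial g^*(L\lambda')$, it shows that $D_{\partial g}$ is monotone. For maximality I would upgrade the inclusion to the equality $D_{\partial g}=\partial\varphi_g$ using the finite-dimensional chain rule \cite[Thm.~23.9]{rockafellar1970convexanalysis}.

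\textbf{The operator $D_{\partial f}$.} In the PBBS setting $D_F$ is built from $\rho A^{\top}A+\partial f=\partial f_\rho$, where $f_\rho\coloneqq f+\tfrac{\rho}{2}\|A\cdot\|^2\in\Gamma_0(\R^n)$; the equality $\rho A^{\top}A+\partial f=\partial f_\rho$ holds because the quadratic term is finite and differentiable. Repeating the argument with $g^*,-B^{\top},d$ replaced by $f_\rho^*,-A^{\top},0$ gives $D_F=\partial(f_\rho^*\circ(-A^{\top}))$. Properness now comes from $\dom f_\rho^*\supseteq\dom f^*$, which holds since $f_\rho\ge f$ implies $f_\rho^*\le f^*$, combined with \ref{PBBS_set_assumption}. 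If the proposition is read with $H=\partial f$ directly, the argument is verbatim the one for $g$.

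\textbf{Main obstacle.} The hard step is the chain rule equality. Rockafellar's result requires $\ran L\cap\mathrm{ri}\dom g^*\neq\emptyset$, whereas \ref{PBBS_set_assumption} only gives nonempty intersection with $\dom g^*$ itself. The gap is genuine. With $A=0$ and $f=\exp$, condition \ref{PBBS_set_assumption} holds because $0\in[0,\infty)=\dom f^*$. Yet $D_{\partial f}\equiv\emptyset$, since $\partial f$ never contains $0$, so $D_{\partial f}$ is not maximal monotone.

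I would try to close the gap in two ways, in this order. First, use polyhedrality or the fixed-point hypothesis \ref{PBBS_fixed_point_assumption} to guarantee that the relevant points lie in the relative interior. Failing that, I would strengthen \ref{PBBS_set_assumption} to its relative-interior version; under that condition the proof is complete.

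As a cross-check on maximality, one can use Minty's theorem through \eqref{CMI}. By Theorem~\ref{thm:Moreaus_identity}, $J_{\gamma D_H}$ has full domain once the primal generalized resolvent does. That in turn amounts to the existence of minimizers in Proposition~\ref{prop:argmin_in_gen_resolvent}, which holds under the same qualification condition.
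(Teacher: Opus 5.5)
Your route is the paper's route. The paper's proof asserts $D_{\partial f}=\partial(f^*\circ -A^{\top})$ and $D_{\partial g}=\partial(g^*\circ -B^{\top})$, checks properness, lower semicontinuity and convexity of these compositions via Assumption~\ref{PBBS_assumptions}\,\ref{PBBS_set_assumption}, and invokes \cite[Theorem~20.25]{bauschke2017monotoneoperators}. It gives no justification for the chain-rule identification, and it drops the $-d$ term, which you handle correctly with $-\langle d,\cdot\rangle$. So the step you flag is exactly the unjustified step in the paper. Your diagnosis is right: $D$ is monotone and its graph lies inside that of the maximal monotone $\partial(g^*\circ L)$, so maximality of $D$ is \emph{equivalent} to the chain-rule equality.

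Your proposal does not prove the statement, and no proof can, because the proposition is false even with \ref{PBBS_fixed_point_assumption}. Your $\exp$ example does not show this. There $\gr{D_{\partial f}}=\emptyset$, so $J_{\rho D_F}$ and $T_{\lambda}$ have empty graphs and $\Fix T_{\lambda}=\emptyset$, which violates \ref{PBBS_fixed_point_assumption}.

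Here is an example satisfying all of (i)--(iv). Take $n=2$, $m=r=1$, $f(x)=\|x\|+x_2$, $A=[1\;\;0]$, $g\equiv0$, $B=1$, $d=0$.
\begin{itemize}
\item $f^*=\iota_C$, where $C$ is the closed unit disk centred at $(0,1)$.
\item $\ran(-A^{\top})=\R\times\{0\}$ meets $C$ only at the origin, where $N_C(0)=\{(0,-t):t\ge0\}$.
\item Hence $(\partial f)^{-1}(-A^{\top}\lambda)$ equals $\{(0,-t):t\ge0\}$ if $\lambda=0$ and is empty otherwise. So $\gr{D_{\partial f}}=\{(0,0)\}$.
\item The same graph results for $D_F$ built from $\rho A^{\top}A+\partial f$.
\item By contrast, $\partial(f^*\circ -A^{\top})=\partial\iota_{\{0\}}=N_{\{0\}}$ strictly extends this graph, so $D_{\partial f}$ is not maximal monotone.
\item Meanwhile $D_{\partial g}=N_{\{0\}}$, so $J_{\rho D_G}\equiv0$, and $J_{\rho D_F}$ has graph $\{(0,0)\}$. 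Hence $\Fix T_{\lambda}=\Fix T_{\mu}=\{0\}$, and \ref{PBBS_fixed_point_assumption} holds.
\end{itemize}

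Consequently, your first repair (through \ref{PBBS_fixed_point_assumption}) cannot succeed. The correct fix is your second one: replace \ref{PBBS_set_assumption} by $\ran(-A^{\top})\cap\mathrm{ri}\,\dom f^*\neq\emptyset$ and $\ran(-B^{\top})\cap\mathrm{ri}\,\dom g^*\neq\emptyset$, or assume $f^*$ and $g^*$ polyhedral. Under that hypothesis your argument is complete, with one adjustment in the $f_\rho$ reading. There the relative-interior condition transfers because $\dom f_\rho^*=\dom f^*+\ran A^{\top}$ (the conjugate of a sum with a finite quadratic is an exact infimal convolution). Hence $\mathrm{ri}\,\dom f_\rho^*=\mathrm{ri}\,\dom f^*+\ran A^{\top}$. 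The bare inclusion $\dom f_\rho^*\supseteq\dom f^*$ that you cite gives properness, but it does not give the relative-interior condition.
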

\begin{proof}
We work to show that Assumption~\ref{PBBS_assumptions} is sufficient to ensure that $f^*\circ -A^{\top}$ and $g^*\circ -B^{\top}$ are proper, lower semicontinous and convex.
The maximal monotonicity of $D_{\partial f}=\partial (f^*\circ -A^{\top})$ and $D_{\partial g}=\partial (g^*\circ -B^{\top})$ then follow from \cite[Theorem~20.25]{bauschke2017monotoneoperators}.
It is a well known result of Fenchel--Moreau that Assumption~\ref{PBBS_assumptions}\,\ref{PBBS_function_assumption} grants the properness, lower semicontinuity and convexity of $f^*$ and $g^*$ \cite[Theorem~13.37]{bauschke2017monotoneoperators}.
It is then straightforward to verify that the composition of a convex and lower semicontinuous function with a linear operator is convex and lower semicontinuous in finite dimensions.
By Assumption~\ref{PBBS_assumptions}\,\ref{PBBS_set_assumption} we have the existence of some $u_1$ and $u_2$ such that $f^*(-A^{\top}u_1)$ and $g^*(-B^{\top}u_2)$ are finite and hence proper.
\end{proof}

\begin{lemma}\label{lem:BBS_bounded}
    Suppose Assumption~\ref{PBBS_assumptions} holds.
    Then the $(\lambda^k)$ and $(\mu^k)$ sequences generated according to \eqref{BBS} are Fej\'{e}r monotone and asymptotically regular, i.e., 
    $$
    \|\lambda^{k+1}-\lambda^k\|\to0\quad\mand\quad \|\mu^{k+1}-\mu^k\|\to0.
    $$
\end{lemma}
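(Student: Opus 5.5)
By the preceding proposition, $D_{\partial f}$ and $D_{\partial g}$ are maximal monotone under Assumption~\ref{PBBS_assumptions}. Hence, for $\rho>0$, the resolvents $J_{\rho D_F}$ and $J_{\rho D_G}$ are single-valued, have full domain, and are firmly nonexpansive. In particular, the updates \eqref{BBS} are well defined and single-valued. The composition $T_\lambda=J_{\rho D_G}\circ J_{\rho D_F}$ is then nonexpansive, and it is in fact averaged, since a composition of two firmly nonexpansive (i.e.\ $\tfrac12$-averaged) maps is $\tfrac23$-averaged. The same holds for $T_\mu=J_{\rho D_F}\circ J_{\rho D_G}$. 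The plan is to run the standard Krasnosel'ski\u{\i}--Mann / averaged operator argument, which is the main tool available in this setting.

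\textbf{Step 1 (Fej\'er monotonicity).} Fix $\bar\lambda\in\Fix T_\lambda$, which exists by Assumption~\ref{PBBS_assumptions}\,\ref{PBBS_fixed_point_assumption}, and set $\bar\mu=J_{\rho D_F}(\bar\lambda)$. Applying firm nonexpansiveness twice gives
\begin{align*}
\|\lambda^{k+1}-\bar\lambda\|^2 &\le \|\mu^{k+1}-\bar\mu\|^2-\|(I-J_{\rho D_G})\mu^{k+1}-(I-J_{\rho D_G})\bar\mu\|^2\\
&\le \|\lambda^k-\bar\lambda\|^2-\|(\lambda^k-\mu^{k+1})-(\bar\lambda-\bar\mu)\|^2-\|(\mu^{k+1}-\lambda^{k+1})-(\bar\mu-\bar\lambda)\|^2 .
\end{align*}
This shows that $(\lambda^k)$ is Fej\'er monotone with respect to $\Fix T_\lambda$. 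Since $\bar\mu\in\Fix T_\mu$, the same chain started at $\mu^{k+1}$ and applied with the roles of the resolvents swapped shows that $(\mu^k)$ is Fej\'er monotone with respect to $\Fix T_\mu$.

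\textbf{Step 2 (Summability).} Telescoping the inequality of Step 1 yields
$$\sum_k\|(\lambda^k-\mu^{k+1})-(\bar\lambda-\bar\mu)\|^2<\infty \quad\mand\quad \sum_k\|(\mu^{k+1}-\lambda^{k+1})-(\bar\mu-\bar\lambda)\|^2<\infty.$$
Hence $\lambda^k-\mu^{k+1}\to\bar\lambda-\bar\mu$ and $\mu^{k+1}-\lambda^{k+1}\to\bar\mu-\bar\lambda$. Adding these two limits gives $\lambda^{k}-\lambda^{k+1}\to0$. Similarly, $\mu^{k+1}-\mu^{k+2}=(\mu^{k+1}-\lambda^{k+1})+(\lambda^{k+1}-\mu^{k+2})\to(\bar\mu-\bar\lambda)+(\bar\lambda-\bar\mu)=0$. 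This establishes asymptotic regularity of both sequences.

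This argument is essentially the proof that averaged operators with a fixed point are asymptotically regular. Alternatively, one could cite that result directly for $T_\lambda$ and $T_\mu$.

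The step I expect to need the most care is Step 1, specifically the choice of the reference point $\bar\mu=J_{\rho D_F}(\bar\lambda)$ and the check that it lies in $\Fix T_\mu$. Indeed,
$$J_{\rho D_F}J_{\rho D_G}\bar\mu=J_{\rho D_F}\bar\lambda=\bar\mu,$$
which is what couples the two Fej\'er statements. Note that this argument only needs $\Fix T_\lambda\neq\emptyset$. I also need to use that the differences $(I-J)$ inherit firm nonexpansiveness, in order to get the residual terms in the chain of Step 1.
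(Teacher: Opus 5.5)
Your proof is correct, and it uses the same Fej\'er-plus-telescoping skeleton as the paper, but with a different key inequality.

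\textbf{The paper's route.} The paper cites that $T_\lambda$ is $\tfrac23$-averaged. It then uses the averaged-operator bound $\|\lambda^{k+1}-\bar\lambda\|^2\le\|\lambda^k-\bar\lambda\|^2-\tfrac12\|\lambda^{k+1}-\lambda^k\|^2$ and telescopes. Finally, it repeats the argument verbatim for $T_\mu$, using the separate hypothesis $\Fix T_\mu\neq\emptyset$.

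\textbf{Your route.} You unroll firm nonexpansiveness of each resolvent instead, which is strictly finer. Writing $a_k=(\lambda^k-\mu^{k+1})-(\bar\lambda-\bar\mu)$ and $b_k=(\mu^{k+1}-\lambda^{k+1})-(\bar\mu-\bar\lambda)$, you have $a_k+b_k=\lambda^k-\lambda^{k+1}$. The estimate $\tfrac12\|a_k+b_k\|^2\le\|a_k\|^2+\|b_k\|^2$ then recovers the paper's inequality, which is exactly where the constant $\tfrac23$ comes from.

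\textbf{What your version buys.}
\begin{itemize}
\item It needs no appeal to the composition-of-averaged-maps result.
\item A single chain anchored at the coupled pair $(\bar\lambda,\bar\mu)$, with $\bar\mu=J_{\rho D_F}\bar\lambda$, handles both sequences. This shows that Assumption~\ref{PBBS_assumptions}\,\ref{PBBS_fixed_point_assumption} only needs $\Fix T_\lambda\neq\emptyset$, since every $\bar\mu\in\Fix T_\mu$ arises this way from $J_{\rho D_G}\bar\mu\in\Fix T_\lambda$.
\item You obtain $\lambda^k-\mu^{k+1}\to\bar\lambda-\bar\mu$. The limit is therefore the same vector for every choice of fixed point.
\end{itemize}

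The last point has a concrete consequence for PBBS. Combined with \eqref{PBBS_identity} and \eqref{PBBS_identity2}, it shows that $Ax^k$ and $Bz^k+d$ converge. This needs none of the interiority or singleton hypotheses the paper later imposes to control $(x^k)$ and $(z^k)$.

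The paper's version is shorter to write given the standard averaged-operator toolkit. Yours is more elementary and yields this extra structural information.
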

\begin{proof}
    Since both sequences are generated similarly, we explicitly show the result for $(\lambda^k)$.
    Our argument may be applied to $(\mu^k)$ by taking $\lambda^k\gets \mu^k$ and $T_{\lambda}\gets T_{\mu}$.
    Since $T_{\lambda}$ is the composition of firmly nonexpansive mappings, it is $2/3$--averaged \cite[Example~4.45]{bauschke2017monotoneoperators}.
    Since $\Fix T_{\lambda}$ is nonempty by assumption, let $\lambda\in\Fix T_{\lambda}$.
    The averagedness of $T_{\lambda}$ grants that the following is satisfied for all $k$ \cite[Proposition~4.35(iii)]{bauschke2017monotoneoperators}:
    \begin{equation}
    \|\lambda^{k+1}-\lambda\|^2=\|T(\lambda^k)-T(\lambda)\|^2\leq \|\lambda^k-\lambda\|^2-\frac12\|\lambda^{k+1}-\lambda^k\|^2.
    \label{BBS_telescope}
    \end{equation}
    One sees that \eqref{BBS_telescope} implies that $(\lambda^k)$ is Fej\'{e}r monotone with respect to $\Fix T_{\lambda}$ and hence bounded \cite[Definition~5.1]{bauschke2017monotoneoperators}. Rearranging \eqref{BBS_telescope} and telescoping yields for any $K\geq 0$:
    $$
    \sum_{k=0}^K\|\lambda^{k+1}-\lambda^k\|^2\leq 2\|\lambda^0-\lambda\|^2,
    $$
    from which we have by taking $K\to\infty$ that $\|\lambda^{k+1}-\lambda^k\|$ is square summable and hence asymptotic regular.
\end{proof}

\begin{lemma}\label{lem:BBS_convergent}
    The BBS sequences $(\lambda^k)$ and $(\mu^k)$ are such that $\lambda^k\to\lambda\in\Fix T_{\lambda}$ and $\mu^k\to\mu\in\Fix T_{\mu}$.
\end{lemma}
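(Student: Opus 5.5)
We plan to follow the standard Krasnosel'ski\u{\i}--Mann/Opial route, working in the finite-dimensional setting $\H=\R^r$ with Assumption~\ref{PBBS_assumptions} in force, as in Lemma~\ref{lem:BBS_bounded}. As there, we treat $(\lambda^k)$ explicitly; the argument for $(\mu^k)$ is identical with $T_\lambda$ replaced by $T_\mu$. By the preceding proposition, $D_{\partial f}$ and $D_{\partial g}$ are maximal monotone. Hence $J_{\rho D_F}$ and $J_{\rho D_G}$ are single-valued, full-domain and firmly nonexpansive, so $T_\lambda=J_{\rho D_G}\circ J_{\rho D_F}$ is a single-valued, everywhere-defined, nonexpansive (indeed $2/3$--averaged) map. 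In particular $T_\lambda$ is continuous on $\R^r$, and $\Fix T_\lambda$ is closed.

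The proof then proceeds in three steps.

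First, by Lemma~\ref{lem:BBS_bounded}, $(\lambda^k)$ is Fej\'er monotone with respect to $\Fix T_\lambda$, hence bounded. By Bolzano--Weierstrass it therefore has a cluster point $\bar\lambda$, the limit of some subsequence $(\lambda^{k_j})$.

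Second, we show $\bar\lambda\in\Fix T_\lambda$. Asymptotic regularity from Lemma~\ref{lem:BBS_bounded} gives $\|T_\lambda\lambda^{k_j}-\lambda^{k_j}\|=\|\lambda^{k_j+1}-\lambda^{k_j}\|\to0$. Continuity of $T_\lambda$ then yields $T_\lambda\bar\lambda-\bar\lambda=\lim_j (T_\lambda\lambda^{k_j}-\lambda^{k_j})=0$. (In a general Hilbert space one would instead invoke the demiclosedness principle for $I-T_\lambda$ and obtain weak convergence only.)

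Third, we upgrade to convergence of the whole sequence. Taking $\lambda=\bar\lambda$ in the Fej\'er inequality \eqref{BBS_telescope}, the sequence $\|\lambda^k-\bar\lambda\|$ is nonincreasing and thus convergent. Along $(k_j)$ it tends to $0$, so the full limit is $0$, i.e.\ $\lambda^k\to\bar\lambda\in\Fix T_\lambda$. Equivalently, one may cite the standard fact \cite[Theorem~5.5]{bauschke2017monotoneoperators} that a Fej\'er monotone sequence whose cluster points all lie in the target set converges to a point of that set.

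The only step needing care is the second one. We must ensure that $T_\lambda$ is genuinely single-valued and continuous, rather than a set-valued composition, so that the limit can be passed through it. This is exactly where maximal monotonicity of the dual operators and the finite-dimensional setting enter; nonexpansiveness gives Lipschitz continuity immediately. Everything else is already contained in Lemma~\ref{lem:BBS_bounded}.
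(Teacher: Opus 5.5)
Your proposal is correct and matches the paper's proof essentially step for step. Both arguments take boundedness and asymptotic regularity from Lemma~\ref{lem:BBS_bounded}, use continuity of the nonexpansive map $T_\lambda$ to show every cluster point lies in $\Fix T_\lambda$, and conclude via Fej\'er monotonicity. Your explicit finite-dimensional Bolzano--Weierstrass step and your monotone-distance argument in place of the cited Fej\'er theorem are just an unpacking of the same reasoning.
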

\begin{proof}
    We once again show the result for $(\lambda^k)$ using arguments that may be repeated for $(\mu^k)$.
    Let $\lambda$ be a sequential cluster point of $(\lambda^k)$.
    By Lemma~\ref{lem:BBS_bounded} we have the existence of a subsequence $(\lambda^{k_j})$ such that $\lambda^{k_j}\to\lambda$.
    By the asymptotic regularity of $(\lambda^k)$ we also have $\lambda^{k_j+1}\to\lambda$.
    Since $\lambda^{k_j+1}=T(\lambda^{k_j})$ and $T_{\lambda}$ is continuous by merit of being nonexpansive, passing to the limit yields $T_{\lambda}(\lambda)=\lambda$.
    We therefore have that every sequential cluster point of $(\lambda^k)$ belongs to $\Fix T_{\lambda}$.
    The claim now follows from the Fej\'{e}r monotonicity of $(\lambda^k)$ and \cite[Theorem~5.5]{bauschke2017monotoneoperators}.
\end{proof}

\begin{lemma}\label{lem:bounded_bbs}
    Let $-A^{\top}\mu\in\mathrm{int}(\dom f^*)$ and $-B^{\top}\lambda\in\mathrm{int}(\dom g^*)$.
    Then the sequences $(x^k)$ and $(z^k)$ generated according to \eqref{PBBS} are bounded. 
\end{lemma}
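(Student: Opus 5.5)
Here $\lambda$ and $\mu$ denote the limits from Lemma~\ref{lem:BBS_convergent}, so $\lambda^k\to\lambda$ and $\mu^k\to\mu$. By Theorem~\ref{thm:PBBS_duality}, the PBBS iterates satisfy
\[
x^k\in(\partial f)^{-1}(-A^{\top}\mu^k)=\partial f^*(-A^{\top}\mu^k)
\quad\mand\quad
z^k\in(\partial g)^{-1}(-B^{\top}\lambda^k)=\partial g^*(-B^{\top}\lambda^k).
\]
The equalities use the conjugacy relation $(\partial h)^{-1}=\partial h^*$ for $h\in\Gamma_0$, which Assumption~\ref{PBBS_assumptions}\,\ref{PBBS_function_assumption} makes available. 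The plan is therefore to show that the subdifferentials $\partial f^*$ and $\partial g^*$ are bounded on a neighbourhood of the limit points $-A^{\top}\mu$ and $-B^{\top}\lambda$. The images $-A^{\top}\mu^k$ and $-B^{\top}\lambda^k$ eventually lie in such a neighbourhood, which gives boundedness of the tails of $(x^k)$ and $(z^k)$.

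The key steps, in order, are as follows.

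\begin{enumerate}[label=(\alph*)]
\item Since $-A^{\top}\mu\in\mathrm{int}(\dom f^*)$, choose $\varepsilon>0$ so that the closed ball $\bar B(-A^{\top}\mu,2\varepsilon)$ lies in $\mathrm{int}(\dom f^*)$.
\item A proper convex lsc function on $\R^r$ is continuous, and hence locally Lipschitz, on the interior of its domain. Alternatively, invoke the local boundedness of maximal monotone operators on the interior of their domain \cite{bauschke2017monotoneoperators}. Either way, $\partial f^*$ is bounded on $\bar B(-A^{\top}\mu,\varepsilon)$. Concretely, $f^*$ is bounded above by some $L$ on the compact set $\bar B(-A^{\top}\mu,2\varepsilon)$, and the subgradient inequality then gives $\|v\|\le (L-\min f^*)/\varepsilon$ for every $v\in\partial f^*(w)$ with $w$ in the smaller ball.
\item Since $A^{\top}$ is a bounded linear map and $\mu^k\to\mu$, there is $K$ such that $-A^{\top}\mu^k\in\bar B(-A^{\top}\mu,\varepsilon)$ for all $k\ge K$. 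Then $\|x^k\|$ is uniformly bounded for $k\ge K$, and the finitely many earlier iterates do not affect boundedness.
\item Repeat steps (a)--(c) verbatim for $z^k$, using $g^*$, $-B^{\top}$ and $\lambda^k\to\lambda$.
\end{enumerate}

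The main technical point is step (b), the local boundedness of $\partial f^*$ near an interior point of its domain. In finite dimensions this is standard: a convex function is bounded above near interior points of its domain, for instance on a simplex containing the ball. Combined with the subgradient inequality, this gives the bound in step (b). I would simply cite the local boundedness theorem for maximal monotone operators, or for subdifferentials of convex functions, from \cite{bauschke2017monotoneoperators} rather than reprove it.

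One subtlety is the indexing. Theorem~\ref{thm:PBBS_duality} ties $x^k$ to $\mu^k$ and $z^k$ to $\lambda^k$ with the same index $k$, so the convergence of each dual sequence from Lemma~\ref{lem:BBS_convergent} suffices and no further coupling is needed.
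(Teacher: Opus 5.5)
Your proposal is correct and follows essentially the same route as the paper, which also combines the local boundedness of $\partial f^*$ and $\partial g^*$ at interior points of their domains (citing Rockafellar's Theorem~24.7, where you sketch the argument directly) with the inclusions $x^k\in\partial f^*(-A^{\top}\mu^k)$ and $z^k\in\partial g^*(-B^{\top}\lambda^k)$ and the convergence $\mu^k\to\mu$, $\lambda^k\to\lambda$, which bounds the tails of $(x^k)$ and $(z^k)$. Two small fixes are needed in step (b): $f^*$ is defined on $\R^n$, not $\R^r$, and $\min f^*$ must be the minimum of $f^*$ over the compact ball $\bar B(-A^{\top}\mu,2\varepsilon)$ (which exists by continuity there), since a global minimum of $f^*$ need not exist.
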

\begin{proof}
    By Lemma~\ref{lem:BBS_convergent} we have that $\mu^k\to\mu\in\Fix T_{\mu}$.
    Since $f^*\in\Gamma_0(\R^n)$, $\partial f^*$ is locally bounded on $\mathrm{int}(\dom f^*)$ \cite[Theorem~24.7]{rockafellar1970convexanalysis}.
    Since we assume $-A^{\top}\mu\in\mathrm{int}(\dom f^*)$, there exists a neighborhood $\mathcal{N}$ of $-A^{\top}\mu$ and $\alpha>0$ such that $u\in\mathcal{N}$ and $x\in \partial f^*(u)$ implies $\|x\|\leq\alpha$.
    Since $-A^{\top}\mu^k\to -A^{\top}\mu$, there exists $K$ such that $-A^{\top}\mu^k\in\mathcal{N}$ for all $k\geq K$.
    From Theorem~\ref{thm:PBBS_duality} we have that $x^k\in\partial f^*(-A^{\top}\mu^k)$, from which it follows that $\|x^k\|\leq\alpha$ for all $k\geq K$.
    We may therefore conclude that the tail of $(x^k)$ is bounded and hence $(x^k)$ is bounded.
    A similar argument shows that $(z^k)$, which is such that $z^k\in\partial g^*(-B^{\top}\lambda^k)$, is also bounded.
\end{proof}

\begin{lemma}\label{lem:PBBS_convergent}
    Let $-A^{\top}\mu\in\mathrm{int}(\dom f^*)$ and $-B^{\top}\lambda\in\mathrm{int}(\dom g^*)$.
    Furthermore suppose that $\partial f^*(-A^*\mu)$ and $\partial g^*(-B^*\lambda)$ are singletons.
    The sequences $(x^k)$ and $(z^k)$ generated according to \eqref{PBBS} are such that
    $$
    x^k\to x\in\partial f^*(-A^{\top}\mu)\quad\mand\quad z^k\to z\in\partial g^*(-B^{\top}\lambda).
    $$ 
\end{lemma}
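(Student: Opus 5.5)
The plan is to combine boundedness (Lemma~\ref{lem:bounded_bbs}) with closedness of the graph of a convex subdifferential. Then the singleton hypothesis pins down every cluster point.

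First, Lemma~\ref{lem:BBS_convergent} gives $\mu^k\to\mu\in\Fix T_\mu$ and $\lambda^k\to\lambda\in\Fix T_\lambda$. Applying the bounded linear maps yields $-A^{\top}\mu^k\to -A^{\top}\mu$ and $-B^{\top}\lambda^k\to -B^{\top}\lambda$. Theorem~\ref{thm:PBBS_duality} gives $x^k\in(\partial f)^{-1}(-A^{\top}\mu^k)$ for every $k$. Under Assumption~\ref{PBBS_assumptions}\,\ref{PBBS_function_assumption}, $f\in\Gamma_0(\R^n)$, so the conjugacy relation $(\partial f)^{-1}=\partial f^*$ holds. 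Hence $x^k\in\partial f^*(-A^{\top}\mu^k)$, and similarly $z^k\in\partial g^*(-B^{\top}\lambda^k)$.

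Next, fix any cluster point $\bar x$ of $(x^k)$. It exists because Lemma~\ref{lem:bounded_bbs} makes $(x^k)$ bounded in $\R^n$. Choose a subsequence with $x^{k_j}\to\bar x$. The pairs $(-A^{\top}\mu^{k_j},x^{k_j})$ lie in $\gr{\partial f^*}$ and converge to $(-A^{\top}\mu,\bar x)$. Since $f^*\in\Gamma_0(\R^n)$, $\partial f^*$ is maximal monotone, and its graph is therefore closed (e.g.\ \cite[Proposition~20.38]{bauschke2017monotoneoperators}). Alternatively, one can pass to the limit directly in the subgradient inequality $f^*(v)\ge f^*(u^{k_j})+\langle x^{k_j},v-u^{k_j}\rangle$. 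This uses lower semicontinuity of $f^*$ and continuity of $f^*$ on $\mathrm{int}(\dom f^*)$ near $-A^{\top}\mu$, which controls $f^*(u^{k_j})\to f^*(-A^{\top}\mu)$. Either route gives $\bar x\in\partial f^*(-A^{\top}\mu)$.

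Finally, the hypothesis $\partial f^*(-A^{\top}\mu)=\{x\}$ forces every cluster point of the bounded sequence $(x^k)$ to equal $x$. A bounded sequence in $\R^n$ with a unique cluster point converges, so $x^k\to x$. The identical argument with $g^*$, $B$ and $\lambda$ gives $z^k\to z\in\partial g^*(-B^{\top}\lambda)$. The main point needing care is the graph-closedness step. It must be invoked for $f^*$ rather than $f$, which is why the conjugacy relation from Theorem~\ref{thm:PBBS_duality} is rewritten first. If one avoids citing maximal monotonicity, the limit argument needs the interior hypothesis to guarantee continuity of $f^*$ at $-A^{\top}\mu$.
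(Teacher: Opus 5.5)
Your proposal is correct and follows essentially the paper's route: boundedness from Lemma~\ref{lem:bounded_bbs}, closedness of the graph of $\partial f^*$ to place every cluster point in $\partial f^*(-A^{\top}\mu)$, and the singleton hypothesis to force convergence. Your indexing $x^{k_j}\in\partial f^*(-A^{\top}\mu^{k_j})$ is cleaner than the paper's $\mu^{k_j+1}$, you make the unique-cluster-point step explicit where the paper leaves it implicit, and one small correction applies to your closing aside: passing to the limit in the subgradient inequality needs only lower semicontinuity of $f^*$ (take the $\liminf$ of $f^*(-A^{\top}\mu^{k_j})$), so the interior hypothesis is needed only for boundedness, not for graph closedness.
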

\begin{proof}
    Lemma~\ref{lem:BBS_convergent} and the linearity of $A$ grant that
    $$
    -A^{\top}\mu^{k_j+1}\to -A^{\top}\mu
    $$
    for any subsequence of $(\mu^k)$.
    We have that $(x^k)$ is bounded by Lemma~\ref{lem:bounded_bbs}, from which it follows that there exists cluster point $x$ and convergent subsequence $x^{k_j}$ such that $x^{k_j}\to x$. 
    Since we know $x^k\in\partial f^*(-A^{\top}\mu^k)$ to hold for all $k$ and by the graph closedness of $\partial f^*$, we have that 
    \begin{equation}
    x^{k_j}\in\partial f^*(-A^{\top}\mu^{k_j+1})\quad\implies\quad x\in \partial f^*(-A^{\top}\mu)
    \label{bbs_graph_closed}
    \end{equation}
    by passing to the limit.
    By assumption we have that $\partial f^*(-A^{\top}\mu^*)$ is a singleton, and so it follows that $x^k\to x\in \partial f^*(-A^{\top}\mu)$.
    An argument analogous to that above grants the convergence of $z^k\to z$ such that $z\in\partial g^*(-B^{\top}\lambda)$.
\end{proof}

\begin{proposition}[PBBS residual]\label{prop:PBBS_residuals}
    The residuals $s^k=\rho A^{\top}(Ax^k+Bz^k+d)$ and $r^k=Ax^k+Bz^k+d$ measure the PBBS update's $(x^k,z^k,\lambda^k)$ violation of the KKT system \eqref{PBBS_primal_inclusion}.
\end{proposition}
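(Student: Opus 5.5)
The plan is to compute, for a PBBS iterate $(x^{k+1},z^{k+1},\lambda^{k+1})$, which elements of the three blocks of \eqref{PBBS_primal_inclusion} the iterate actually produces, and to show these elements are exactly $s^{k+1}$ (up to sign), $0$, and $r^{k+1}$. Throughout I take $f,g$ convex as in Assumption~\ref{PBBS_assumptions}, so the argmins in \eqref{PBBS} are characterized by their optimality conditions via Proposition~\ref{prop:argmin_in_gen_resolvent}.

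\textbf{Third block.} By definition $r^{k+1}=Ax^{k+1}+Bz^{k+1}+d$, which is the third block, so it measures the primal feasibility violation directly. Equivalently, \eqref{PBBS_dual_update} gives $\rho r^{k+1}=\lambda^{k+1}-\lambda^k$.

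\textbf{Second block.} Write the optimality condition of \eqref{PBBS_z_update}: $0\in\partial g(z^{k+1})+\rho B^{\top}(Ax^{k+1}+Bz^{k+1}+d+\rho^{-1}\lambda^k)$. By \eqref{PBBS_dual_update} the bracketed quantity times $\rho$ equals $\lambda^{k+1}$, so $0\in\partial g(z^{k+1})+B^{\top}\lambda^{k+1}$. This agrees with Theorem~\ref{thm:PBBS_duality}, which gives $z^k\in(\partial g)^{-1}(-B^{\top}\lambda^k)$. Hence the second block is satisfied exactly at every iterate.

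\textbf{First block.} The optimality condition of \eqref{PBBS_x_update} is $0\in\partial f(x^{k+1})+\rho A^{\top}Ax^{k+1}+A^{\top}\lambda^k$. Substituting $\lambda^k=\lambda^{k+1}-\rho r^{k+1}$ gives
\[
\rho A^{\top}(Ax^{k+1}+Bz^{k+1}+d)\in(\rho A^{\top}A+\partial f)x^{k+1}+A^{\top}\lambda^{k+1},
\]
that is, $s^{k+1}\in(\rho A^{\top}A+\partial f)x^{k+1}+A^{\top}\lambda^{k+1}$. So $s^{k+1}$ is an element of the first block of \eqref{PBBS_primal_inclusion} evaluated at the current iterate.

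\textbf{Conclusion.} Together, $(s^{k+1},0,r^{k+1})$ lies in the KKT operator of \eqref{PBBS_primal_inclusion} at $(x^{k+1},z^{k+1},\lambda^{k+1})$. Therefore $s^k=r^k=0$ if and only if the iterate solves the KKT system, and the magnitudes $\|s^k\|,\|r^k\|$ quantify the violation. One can add that $\|r^{k}\|=\rho^{-1}\|\lambda^{k}-\lambda^{k-1}\|\to0$ by Lemma~\ref{lem:BBS_bounded}, and then $s^k=\rho A^{\top}r^k\to0$.

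The main obstacle is the index bookkeeping in the first block: the $x$-update uses the old multiplier $\lambda^k$, and the paper's residual convention has to match the substitution through \eqref{PBBS_dual_update}. In the nonconvex case I would instead use the resolvent inclusion $x^{k+1}\in J^A_{\rho^{-1}\partial f}(\cdot)$ directly, following the convention of Remark~\ref{rem:gen_resolvent_is_argmin}.
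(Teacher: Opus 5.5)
Your proposal is correct and follows essentially the same route as the paper. Both write the first-order conditions of the $x$- and $z$-updates, substitute $\lambda^k=\lambda^{k+1}-\rho(Ax^{k+1}+Bz^{k+1}+d)$ from the dual update, and conclude that the $z$-block holds exactly, the $x$-block is violated by precisely $s^{k+1}$, and $r^{k+1}$ is the constraint-block violation. Two small differences: your packaging of $(s^{k+1},0,r^{k+1})$ as an element of the KKT operator, together with the observation $s^k=\rho A^{\top}r^k$ (so $r^k=0$ alone certifies a KKT point), is a slightly tidier statement of the same conclusion; and the convexity you impose is not needed, since the optimality inclusions hold by the generalized-resolvent convention of Remark~\ref{rem:gen_resolvent_is_argmin}.
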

\begin{proof}
    The first--order optimality condition of \eqref{PBBS_x_update} grants
    \begin{equation}
    0\in\partial f(x^{k+1})+\rho A^*(Ax^{k+1}+\rho^{-1}\lambda^k).
    \label{PBBS_x_optimality}
    \end{equation}
    Since \eqref{PBBS_identity} holds for all $k$, the tuple $(x^{k+1},\mu^{k+1})$ is always such that $0\in\partial f(x^{k+1})+A^*\mu^{k+1}$.
    However, the tuple $(x^{k+1},\lambda^{k+1})$ does not enjoy the same property.
    Substituting \eqref{PBBS_dual_update} into \eqref{PBBS_x_optimality} yields 
    \begin{multline*}
    0\in\partial f(x^{k+1})+\rho A^{\top}(Ax^{k+1}+(\rho^{-1}\lambda^{k+1}-Ax^{k+1}-Bz^{k+1}-d))\\
    \iff\quad \rho A^{\top}(Ax^{k+1}+Bz^{k+1}+d)\in (\rho A^{\top}A+\partial f)x^{k+1}+A^{\top}\lambda^{k+1}.
    \end{multline*}
    We therefore view the quantity $\rho A^{\top}(Ax^k+Bz^k+d)$ as a \emph{dual residual}.
    Now considering \eqref{PBBS_z_update}, we have 
    $$
    0\in \partial g(z^{k+1})+\rho B^*(Ax^{k+1}+Bz^{k+1}+d+\rho^{-1}\lambda^k).
    $$
    It follows by substituting \eqref{PBBS_dual_update} that
    \begin{align*}
    0\in\partial g(z^{k+1})+\rho B^*(Ax^{k+1}+Bz^{k+1}+d+(\rho^{-1}\lambda^{k+1}-Ax^{k+1}-Bz^{k+1}-d))
    =\partial g(z^{k+1})+B^*\lambda^{k+1},
    \end{align*}
    which indicates that dual feasibility is always satisfied by $(z^{k+1},\lambda^{k+1})$. 
    In light of these results, we assess the feasibility of PBBS iterate $(x^k,z^k,\lambda^k)$ according to the quantities $\rho A^{\top}(Ax^k+Bz^k+d)$ and $Ax^k+Bz^k+d$. 
\end{proof}

In light of Proposition~\ref{prop:PBBS_residuals} and \eqref{PBBS_dual_update}, one sees that asymptotic regularity of the sequence $(\lambda^k)$ (i.e., $\|\lambda^{k+1}-\lambda^k\|\to0$) implies that the PBBS residuals converge to zero in norm and thus $(x^k,z^k,\lambda^k)\to(x,z,\lambda)$ such that $(x,z,\lambda)$ satisfies \eqref{PBBS_primal_inclusion}. 

\begin{theorem}
    Let Assumption~\ref{PBBS_assumptions} hold.
    Suppose furthermore that $-A^{\top}\mu\in\mathrm{int}(\dom f^*)$ and $-B^{\top}\lambda\in\mathrm{int}(\dom g^*)$ and $\partial f^*(-A^*\mu)$ and $\partial g^*(-B^*\lambda)$ are singletons.
    The sequence $(x^k,z^k,\lambda^k)$ generated according to \eqref{PBBS} converges to $(x,z,\lambda)$ such that \eqref{PBBS_primal_inclusion} holds.
\end{theorem}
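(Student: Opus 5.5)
The plan is to assemble the preceding lemmas and then pass to the limit in the optimality conditions from Proposition~\ref{prop:PBBS_residuals}.

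\textbf{Step 1 (convergence of each sequence).} By Lemma~\ref{lem:BBS_convergent}, under Assumption~\ref{PBBS_assumptions} we have $\lambda^k\to\lambda\in\Fix T_\lambda$ and $\mu^k\to\mu\in\Fix T_\mu$. By Lemma~\ref{lem:PBBS_convergent}, whose hypotheses are exactly the extra interiority and singleton assumptions of the theorem, $x^k\to x\in\partial f^*(-A^{\top}\mu)$ and $z^k\to z\in\partial g^*(-B^{\top}\lambda)$. Thus $(x^k,z^k,\lambda^k)\to(x,z,\lambda)$, and it remains to verify that this limit satisfies \eqref{PBBS_primal_inclusion}.

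\textbf{Step 2 (feasibility).} Lemma~\ref{lem:BBS_bounded} gives $\|\lambda^{k+1}-\lambda^k\|\to0$. By \eqref{PBBS_dual_update}, $r^{k+1}=Ax^{k+1}+Bz^{k+1}+d=\rho^{-1}(\lambda^{k+1}-\lambda^k)\to0$. By continuity of the linear maps, $Ax+Bz+d=0$, which is the third row of \eqref{PBBS_primal_inclusion}. It also follows that $s^k=\rho A^{\top}r^k\to0$.

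\textbf{Step 3 (first two rows).} From the proof of Proposition~\ref{prop:PBBS_residuals}, for all $k$ we have
$$
\partial g(z^{k+1})+B^{\top}\lambda^{k+1}\ni 0
\quad\text{and}\quad
s^{k+1}\in(\rho A^{\top}A+\partial f)x^{k+1}+A^{\top}\lambda^{k+1}.
$$
Here the convexity in Assumption~\ref{PBBS_assumptions} makes Proposition~\ref{prop:argmin_in_gen_resolvent} an equality, so the argmin updates are genuine. Equivalently,
$$
-B^{\top}\lambda^{k+1}\in\partial g(z^{k+1})
\quad\text{and}\quad
s^{k+1}-\rho A^{\top}Ax^{k+1}-A^{\top}\lambda^{k+1}\in\partial f(x^{k+1}).
$$
Since $f,g\in\Gamma_0$, the graphs of $\partial f$ and $\partial g$ are closed (maximal monotone operators in finite dimensions). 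Both arguments and values converge, so passing to the limit gives $-B^{\top}\lambda\in\partial g(z)$ and $-\rho A^{\top}Ax-A^{\top}\lambda\in\partial f(x)$. These are exactly the first two rows of \eqref{PBBS_primal_inclusion}.

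\textbf{Main obstacle.} The delicate point is the pairing in the first row. Lemma~\ref{lem:PBBS_convergent} identifies $x$ via $\mu$, namely $-A^{\top}\mu\in\partial f(x)$, whereas \eqref{PBBS_primal_inclusion} requires the multiplier $\lambda$ together with the term $\rho A^{\top}Ax$. Going through the residual form with $s^k\to0$ avoids having to reconcile the two directly.

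As a consistency check, \eqref{PBBS_identity} in the limit gives $\mu=\lambda+\rho Ax$, so $-A^{\top}\mu=-A^{\top}\lambda-\rho A^{\top}Ax$, which matches the first row. This requires being careful with the index shift in \eqref{PBBS_identity}, where $\lambda^k$ pairs with $\mu^{k+1}$ and $x^{k+1}$; both indices have the same limits, so the shift is harmless.
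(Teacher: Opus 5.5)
Your proposal is correct and follows the paper's proof: Lemmas~\ref{lem:BBS_convergent} and \ref{lem:PBBS_convergent} give convergence of all sequences, and feasibility comes from $\|Ax^k+Bz^k+d\|=\rho^{-1}\|\lambda^{k+1}-\lambda^k\|\to0$ via Lemma~\ref{lem:BBS_bounded}. The only difference is in how the first two rows are obtained. The paper uses what you call the consistency check: it inverts $x\in\partial f^*(-A^{\top}\mu)$ and $z\in\partial g^*(-B^{\top}\lambda)$ via $\partial h^*=(\partial h)^{-1}$, then applies $\mu=\lambda+\rho Ax$ from \eqref{PBBS_identity} in the limit. Your main Step~3 instead passes to the limit in the residual inclusions, using closedness of $\gr{\partial f}$ and $\gr{\partial g}$; both routes are valid.
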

\begin{proof}
    By Lemma~\ref{lem:BBS_convergent}, we have that $\lambda^k\to\lambda$ and $\mu^k\to\mu$. Lemma~\ref{lem:PBBS_convergent} grants that $x^k\to x\in \partial f^*(-A^*\mu)$ and $z^k\to z\in\partial g^*(-B^*\lambda)$.
    By the relation $\partial h^*=(\partial h)^{-1}$ for $h\in\Gamma (\H)$, we see that 
    $$
    0\in\partial f(x)+A^*\mu\quad\mand\quad0\in\partial g(z)+B^*\lambda.
    $$
    Since $\lambda^k-\mu^{k+1}=-\rho Ax^{k+1}$ holds for all $k$ and $A$ is linear, we have that $\mu=\lambda+\rho Ax$ and hence $0\in\partial f(x)+\rho A^*Ax+A^*\lambda$. 
    
    By \eqref{PBBS_dual_update} we have that 
    $$
    \|Ax^k+Bz^k+d\|=\rho^{-1}\|\lambda^{k+1}-\lambda^k\|,
    $$
    with the right hand side converging to zero by Lemma~\ref{lem:BBS_bounded}.
\end{proof}

\subsection{Numerical experiments}\label{sec:experiment_results}
We experimentally benchmark PBBS on both the classical and an $\ell_0$--regularized variant of the LASSO problem for linear predictive modeling.
We compare PBBS to the fast iterative shrinkage--thresholding algorithm (FISTA), which is a staple LASSO approach \cite{beck2009fista}.
We also compare PBBS and FISTA to a relaxation approach known as \emph{sparse relaxed regularized regression} (SR3).
SR3 is considered state--of--the--art in terms of numerical performance and efficiency for obtaining approximate solutions \cite{zheng2019sr3}.
We use performance profiles to capture the comparison (see \cite{more2002performanceprofiles} for more information). 

We compare the algorithms with eight regression instances, with the design and response data taken from the Faraway and scikit--learn datasets \cite{faraway2005data,pedregosa2011scikitlearn}.
The data is standardized by centering each feature to have zero mean and scaling to unit variance.
Each dataset is randomly sampled to form model training sets $U$ and $w$, where each algorithm sees the same training set in each instance.
The remaining samples $U_{\mathrm{test}}$ and $w_{\mathrm{test}}$ are used to validate each model's accuracy.
We report the mean squared error (MSE) $\frac1m\|U_{\mathrm{test}}\bar{x}-w_{\mathrm{test}}\|^2$ where $\bar{x}$ denotes the solution recovered by a method and $m$ is the number of features.

\subsubsection{LASSO}\label{sec:lasso}
The LASSO problem \cite{TibshiraniLASSO} is given by
\begin{equation}
\minimize_{x\in\R^n}\quad \frac12\|Ux-w\|^2+\lambda\|x\|_1
\label{lasso}
\end{equation}
for some regularization parameter $\lambda>0$, which we set to $0.1$ throughout.
Here $U\in\R^{m\times n}$ is a feature matrix with response vector $w\in\R^m$.
We relax and recast \eqref{lasso} for PBBS as
\begin{equation}
\minimize_{(x,z)\in\R^n\times\R^n}\quad \frac12\|Ux-w\|^2+\lambda\|z\|_1+\frac{\rho}2\|x\|^2\quad\subjectto\quad x-z=0.
\label{PBBS_lasso}
\end{equation}
In contrast, the SR3 approach relaxes \eqref{lasso} according to 
\begin{equation}
\minimize_{(x,z)\in\R^n\times\R^n}\quad \frac12\|Ux-w\|^2+\lambda\|z\|_1+\frac{\kappa}2\|x-z\|^2,
\label{SR3_lasso}
\end{equation}
where $\kappa>0$ controls the level of relaxation.
We note that the SR3 relaxation approaches \eqref{lasso} as $\kappa\to\infty$, while the PBBS relaxation approaches \eqref{lasso} as $\rho\to0$.
We empirically selected relaxation parameters $\rho$ and $\kappa$ for our experiments in order to have them recover solutions of comparable objective value to each other and FISTA.
We subsequently set $\rho=\frac1\kappa=0.1$ throughout.

Although expressions for PBBS iterate residuals are provided in Proposition~\ref{prop:PBBS_residuals}, we set the algorithm's stopping condition to be once $\|z^{k+1}-z^k\|\leq 10^{-6}$.
This choice is made in accordance with FISTA and SR3, which use iterate distance stopping conditions as standard.

We compare the MSE of the solutions obtained by each method in Figure~\ref{fig:lasso_mse}, where we observe that the quality of the models obtained by each method are almost indistinguishable in terms of MSE.
For this reason, which algorithm is better is entirely determined by which has a lesser run--time.

\begin{figure}[htb!]
\centering
\begin{tikzpicture}
\begin{axis}[
    ybar,
    bar width=7pt,
    width=0.9\textwidth,
    height=6cm,
    ylabel={MSE},
    xlabel={Data set},
    xtick=data,
    ymin=0,
    axis lines*=left,
    enlarge x limits=0.08,
    legend style={
        at={(0.815,0.85)},
        anchor=south,
        draw=none,
        legend columns=3,
        column sep=4pt
    },
    ymajorgrids=true,
    grid style=dashed,
]

\addplot[
    fill=cb2blue!80,
    draw=none
] table[
    x=dataset,
    y=PBBS,
    col sep=comma
] {l1mse.csv};

\addplot[
    fill=cb2red!80,
    draw=none
] table[
    x=dataset,
    y=SR3,
    col sep=comma
] {l1mse.csv};

\addplot[
    fill=cb2green!80,
    draw=none
] table[
    x=dataset,
    y=FISTA,
    col sep=comma
] {l1mse.csv};

\legend{PBBS, SR3, FISTA}

\end{axis}
\end{tikzpicture}
\caption{The MSE of solutions obtained by PBBS (left bar), SR3 (center bar) and FISTA (right bar) for eight instances of \eqref{lasso}.}
\label{fig:lasso_mse}
\end{figure}

Performance profiles comparing the run time of PBBS against SR3 and FISTA are included in Figure~\ref{fig:lasso_PP}, where we observe PBBS to meet the stopping condition before both methods in every instance.
In Figure~\ref{fig:lasso_experiments} we plot consecutive iterate distances against time for a high--dimensional instance from \cite{uci2011ctscan} where $U\in\R^{48500\times 386}$.
We also plot the distance between iterates and higher--tolerance solutions obtained with stopping condition $\|z^k-z^{k-1}\|\leq10^{-8}$. We note that PBBS terminates after a single iteration in the instance of Figure~\ref{fig:lasso_experiments}.

\begin{figure}[htb!]
\centering
\begin{subfigure}{0.48\textwidth}
\centering
\begin{tikzpicture}
\begin{axis}[
    width=\textwidth,
    height=5cm,
    axis lines*=left,
    xlabel={Performance ratio $\tau$},
    ylabel={Fraction of problems},
    xmin=1,
    xmax=50,
    ymin=0,
    ymax=1.1,
    ytick={0,1},
    y axis line style={
    shorten >=4%
    },
    ymajorgrids=false,
    grid style={gray!20},
    legend style={
        at={(0.97,0.05)},
        anchor=south east,
        draw=none,
        fill=none,
        legend columns=1,
        column sep=8pt
    },
]

\addplot[
    const plot,
    line width = 2.5pt,
    cb2blue
]
table[
    x=tau,
    y=PBBS,
    col sep=comma
] {l1PBBSvsSR3_PP.csv};

\addlegendentry{PBBS}

\addplot[
    const plot,
    line width=2.5pt,
    dashed,
    cb2red
]
table[
    x=tau,
    y=SR3,
    col sep=comma
] {l1PBBSvsSR3_PP.csv};

\addlegendentry{SR3}
\end{axis}
\end{tikzpicture}
\end{subfigure}
\hfill
\begin{subfigure}{0.48\textwidth}
\centering
\begin{tikzpicture}
\begin{axis}[
    width=\textwidth,
    height=5cm,
    axis lines*=left,
    xlabel={Performance ratio $\tau$},
    ylabel={Fraction of problems},
    xmin=1,
    xmax=1200,
    ymin=0,
    ymax=1.1,
    ytick={0,1},
    y axis line style={
    shorten >=4%
    },
    ymajorgrids=false,
    grid style={gray!20},
    legend style={
        at={(0.97,0.05)},
        anchor=south east,
        draw=none,
        fill=none,
        legend columns=1,
        column sep=8pt
    },
]

\addplot[
    const plot,
    line width = 2.5pt,
    cb2blue
]
table[
    x=tau,
    y=PBBS,
    col sep=comma
] {l1PBBSvsFISTA_PP.csv};

\addlegendentry{PBBS}

\addplot[
    const plot,
    line width=2.5pt,
    densely dotted,
    cb2green
]
table[
    x=tau,
    y=FISTA,
    col sep=comma
] {l1PBBSvsFISTA_PP.csv};

\addlegendentry{FISTA}
\end{axis}
\end{tikzpicture}
\end{subfigure}
\caption{Performance profiles comparing PBBS to SR3 and FISTA in terms of run time.}
\label{fig:lasso_PP}
\end{figure}

\begin{figure}[htb!]
\centering
\begin{subfigure}{0.48\textwidth}
\centering
\begin{tikzpicture}
\begin{axis}[
    width=\textwidth,
    height=5cm,
    xmode=log,
    ymode=log,
    xmin=5e-4,
    xmax=15, 
    ylabel={$\|z^k-z^{k-1}\|$},
    xlabel={Runtime (seconds)},
    scaled x ticks=false,
    xticklabel style={
    /pgf/number format/fixed,
    /pgf/number format/precision=3
},
    xmajorgrids=false,
    ymajorgrids=false,
    legend to name=combinedlegend,
    legend columns = 3,
    legend style={font=\scriptsize,draw=none},
]

\addplot[
    line width = 2.5pt,
    mark={},
    mark size=2pt,
    densely dotted,
    cb2green
] table[
    col sep = comma,
    x=x,
    y=y
] {l1FISTA_cons_dist.csv};
\addlegendentry{FISTA}

\addplot[
    line width = 2.5pt,
    mark={},
    mark size=2pt,
    cb2blue
] table[
    col sep=comma,
    x=x,
    y=y
] {l1PBBS_cons_dist.csv};
\addlegendentry{PBBS}

\addplot[
    line width = 2.5pt,
    mark={},
    mark size=2pt,
    dashed,
    cb2red
] table[
    col sep = comma,
    x=x,
    y=y
] {l1SR3_cons_dist.csv};
\addlegendentry{SR3}
\end{axis}
\end{tikzpicture}
\end{subfigure}
\hfill
\begin{subfigure}{0.48\textwidth}
\centering
\begin{tikzpicture}
\begin{axis}[
    width=\textwidth,
    height=5cm,
    xmode=log,
    ymode=log,
    xmin=5e-4,
    xmax=15,
    ylabel={$\|x^k-\bar{x}\|$},
    xlabel={Runtime (seconds)},
    scaled x ticks=false,
    xticklabel style={
    /pgf/number format/fixed,
    /pgf/number format/precision=3
},
    xmajorgrids=false,
    ymajorgrids=false,
    legend to name=combinedlegend2,
    legend columns = 3,
    legend style={font=\scriptsize,draw=none},
]

\addplot[
    line width = 2.5pt,
    mark={},
    mark size=2pt,
    densely dotted,
    cb2green
] table[
    col sep = comma,
    x=x,
    y=y
] {l1FISTA_sol_dist.csv};
\addlegendentry{FISTA}

\addplot[
    line width = 2.5pt,
    mark={},
    mark size=2pt,
    cb2blue
] table[
    col sep=comma,
    x=x,
    y=y
] {l1PBBS_sol_dist.csv};
\addlegendentry{PBBS}

\addplot[
    line width = 2.5pt,
    mark={},
    mark size=2pt,
    dashed,
    cb2red
] table[
    col sep = comma,
    x=x,
    y=y
] {l1SR3_sol_dist.csv};
\addlegendentry{SR3}
\end{axis}
\end{tikzpicture}
\end{subfigure}
\caption{Logarithmic plots comparing the convergence of PBBS, SR3 and FISTA for \eqref{lasso} with data taken from \cite{uci2011ctscan}.}
\label{fig:lasso_experiments}
\end{figure}

\subsubsection{Cardinality-regularized LASSO}\label{sec:l0_lasso} 
We also apply PBBS, SR3 and FISTA to the $\ell_0$--regularized regression problem
\begin{equation}
\minimize_{x\in\R^n}\quad \frac12\|Ux-w\|^2+\lambda\|x\|_0,
\label{l0_lasso}
\end{equation}
where $\|x\|_0=\sum_{i=1}^n|x_i|^0$ with the convention that $|x_i|^0=0$ if $x_i=0$ and $|x_i|^0=1$ otherwise.
We apply PBBS and SR3 to relaxations of \eqref{l0_lasso} akin to the relaxations in section~\ref{sec:lasso}.
In Figure~\ref{fig:l0_lasso_mse} we show the MSE of high tolerance solutions obtained by each method. We bring the reader's attention to FISTA for data set 4, where it appears to have significantly overfit the data.

\begin{figure}[htb!]
\centering
\begin{tikzpicture}
\begin{axis}[
    ybar,
    bar width=7pt,
    width=0.9\textwidth,
    height=6cm,
    ylabel={MSE},
    xlabel={Data set},
    xtick=data,
    ymin=0,
    axis lines*=left,
    enlarge x limits=0.08,
    legend style={
        at={(0.815,0.85)},
        anchor=south,
        draw=none,
        legend columns=3,
        column sep=4pt
    },
    ymajorgrids=true,
    grid style=dashed,
]

\addplot[
    fill=cb2blue!80,
    draw=none,
] table[
    x=dataset,
    y=PBBS,
    col sep=comma
] {l0mse.csv};

\addplot[
    fill=cb2red!80,
    draw=none
] table[
    x=dataset,
    y=SR3,
    col sep=comma
] {l0mse.csv};

\addplot[
    fill=cb2green!80,
    draw=none
] table[
    x=dataset,
    y=FISTA,
    col sep=comma
] {l0mse.csv};

\legend{PBBS, SR3, FISTA}

\end{axis}
\end{tikzpicture}
\caption{The MSE of solutions obtained by PBBS (left bar), SR3 (center bar) and FISTA (right bar) for eight instances of \eqref{l0_lasso}.}
\label{fig:l0_lasso_mse}
\end{figure}

Although analyzing PBBS convergence for nonconvex problems is beyond the scope of this work, we empirically observe the method to perform well for \eqref{l0_lasso}. We also note that since problem \eqref{l0_lasso} is nonconvex, our work shows that the equivalence of PBBS and BBS still holds. 

We include performance profiles comparing PBBS with SR3 and FISTA for \eqref{l0_lasso} in Figure~\ref{fig:l0_PP}. We again compared the methods for the high-dimensional instance from \cite{uci2011ctscan} where $U \in \mathbb{R}^{48500 \times 386}$. The consecutive iterate differences and iterate distance to a solution for each method are given in Figure~\ref{fig:L0_lasso_experiments}. In this case, both PBBS and SR3 solved the problem in one step, obtaining solutions better than that obtained by FISTA for the test set. 

\begin{figure}[htb!]
\centering
\begin{subfigure}{0.48\textwidth}
\centering
\begin{tikzpicture}
\begin{axis}[
    width=\textwidth,
    height=5cm,
    axis lines*=left,
    xlabel={Performance ratio $\tau$},
    ylabel={Fraction of problems},
    xmin=1,
    xmax=45,
    ymin=0,
    ymax=1.1,
    ytick={0,1},
    y axis line style={
    shorten >=8%
    },
    ymajorgrids=false,
    grid style={gray!20},
    legend style={
        at={(0.97,0.05)},
        anchor=south east,
        draw=none,
        fill=none,
        legend columns=1,
        column sep=8pt
    },
]

\addplot[
    const plot,
    line width = 2.5pt,
    cb2blue
]
table[
    x=tau,
    y=PBBS,
    col sep=comma
] {l0PBBSvsSR3_PP.csv};

\addlegendentry{PBBS}

\addplot[
    const plot,
    line width=2.5pt,
    dashed,
    cb2red
]
table[
    x=tau,
    y=SR3,
    col sep=comma
] {l0PBBSvsSR3_PP.csv};

\addlegendentry{SR3}
\end{axis}
\end{tikzpicture}
\end{subfigure}
\hfill
\begin{subfigure}{0.48\textwidth}
\centering
\begin{tikzpicture}
\begin{axis}[
    width=\textwidth,
    height=5cm,
    axis lines*=left,
    xlabel={Performance ratio $\tau$},
    ylabel={Fraction of problems},
    xmin=1,
    xmax=350,
    ymin=0,
    ymax=1.1,
    ytick={0,1},
    y axis line style={
    shorten >=8%
    },
    ymajorgrids=false,
    grid style={gray!20},
    legend style={
        at={(0.97,0.05)},
        anchor=south east,
        draw=none,
        fill=none,
        legend columns=1,
        column sep=8pt
    },
]

\addplot[
    const plot,
    line width = 2.5pt,
    cb2blue
]
table[
    x=tau,
    y=PBBS,
    col sep=comma
] {l0PBBSvsFISTA_PP.csv};

\addlegendentry{PBBS}

\addplot[
    const plot,
    line width=2.5pt,
    densely dotted,
    cb2green
]
table[
    x=tau,
    y=FISTA,
    col sep=comma
] {l0PBBSvsFISTA_PP.csv};

\addlegendentry{FISTA}
\end{axis}
\end{tikzpicture}
\end{subfigure}
\caption{Performance profiles comparing PBBS to SR3 and FISTA in terms of run time for \eqref{l0_lasso}.}
\label{fig:l0_PP}
\end{figure}

\begin{figure}[htb!]
    \centering
    \begin{subfigure}{0.48\textwidth}
        \centering
        \begin{tikzpicture}
        \begin{axis}[
            width=\textwidth,
            height=5cm,
            xmode=log,
            ymode=log,
            xmin=5e-4,
            xmax=15,
            ylabel={$\|z^k-z^{k-1}\|$},
            xlabel={Runtime (seconds)},
            scaled x ticks=false,
            xticklabel style={
            /pgf/number format/fixed,
            /pgf/number format/precision=3
        },
            xmajorgrids=false,
            ymajorgrids=false,
            legend to name=combinedlegend3,
            legend columns = 3,
            legend style={font=\scriptsize,draw=none},
        ]
        
        \addplot[
            line width = 2.5pt,
            mark={},
            mark size=2pt,
            densely dotted,
            cb2green
        ] table[
            col sep = comma,
            x=x,
            y=y
        ] {l0FISTA_cons_dist.csv};
        \addlegendentry{FISTA}
        
        \addplot[
            line width = 2.5pt,
            mark={},
            mark size=2pt,
            cb2blue
        ] table[
            col sep=comma,
            x=x,
            y=y
        ] {l0PBBS_cons_dist.csv};
        \addlegendentry{PBBS}
        
        \addplot[
            line width = 2.5pt,
            mark={},
            mark size=2pt,
            dashed,
            cb2red
        ] table[
            col sep = comma,
            x=x,
            y=y
        ] {l0SR3_cons_dist.csv};
        \addlegendentry{SR3}
        \end{axis}
        \end{tikzpicture}
    \end{subfigure}
    \hfill
    \begin{subfigure}{0.48\textwidth}
        \centering
        \begin{tikzpicture}
        \begin{axis}[
            width=\textwidth,
            height=5cm,
            xmode=log,
            ymode=log,
            xmin=5e-4,
            xmax=15,
            ylabel={$\|x^k-x^*\|$},
            xlabel={Runtime (seconds)},
            scaled x ticks=false,
            xticklabel style={
            /pgf/number format/fixed,
            /pgf/number format/precision=3
        },
            xmajorgrids=false,
            ymajorgrids=false,
            legend to name=combinedlegend4,
            legend columns = 3,
            legend style={font=\scriptsize,draw=none},
        ]
        
        \addplot[
            line width = 2.5pt,
            mark={},
            mark size=2pt,
            densely dotted,
            cb2green
        ] table[
            col sep = comma,
            x=x,
            y=y
        ] {l0FISTA_sol_dist.csv};
        \addlegendentry{FISTA}
        
        \addplot[
            line width = 2.5pt,
            mark={},
            mark size=2pt,
            cb2blue
        ] table[
            col sep=comma,
            x=x,
            y=y
        ] {l0PBBS_sol_dist.csv};
        \addlegendentry{PBBS}
        
        \addplot[
            line width = 2.5pt,
            mark={},
            mark size=2pt,
            dashed,
            cb2red
        ] table[
            col sep = comma,
            x=x,
            y=y
        ] {l0SR3_sol_dist.csv};
        \addlegendentry{SR3}
        \end{axis}
        \end{tikzpicture}
    \end{subfigure}
    \caption{Logarithmic plots comparing the convergence of PBBS, SR3 and FISTA for \eqref{l0_lasso} with data taken from \cite{uci2011ctscan}.}
    \label{fig:L0_lasso_experiments}
\end{figure}

\section{Conclusion}

This work establishes a complete algorithmic duality framework to derive and explore the relationships between resolvent splitting methods and alternating minimization algorithms.
 The key is the Complete Moreau's Identity (Theorem~\ref{thm:Moreaus_identity}), which explicitly describes the relationships between primal and dual governing variables.
We demonstrate the framework by establishing a general duality between FBS and AMA and between DR and ADMM, extending the well known and fundamental relationship between these methods in the convex setting.
These results are key to, for instance, implementing principled primal--dual schemes and duality--based extrapolation schemes in nonconvex settings.

We also use our duality framework to develop and analyze a promising alternating minimization scheme based upon Yosida regularization, backward--backward splitting, and AMA. This approach, which we call PBBS, is naturally amenable to problems in statistics and machine learning. Our PBBS convergence analysis illustrates how Gabay duality and the Gabay identites provide a principled approach for studying alternating minimization algorithms. We use the Gabay framework to establish convergence guarantees for PBBS in the convex setting, identifying a case where AMA may converge without strong convexity assumptions. 

We validate the practical relevance of PBBS with numerical experiments on $\ell_1$ and $\ell_0$ regularized instances of LASSO for linear predictive modeling. We benchmark PBBS with the well--established FISTA and SR3 algorithms. Our numerical results demonstrate that PBBS consistently achieves solution quality comparable to that of FISTA and SR3 in less computational time.

\section*{Acknowledgements}

The MATRIX Institute workshop \emph{Splitting Algorithms - Advances, Challenges, and Opportunities} (February 2025) was instrumental to the success of this work.
Participation of all authors was made possible by generous financial support from MATRIX Institute, Simons Foundation, and Australian Mathematical Sciences Institute, as well as Australian Mathematical Society and its special interest group Mathematics of Computation and Optimisation, and an in-kind contribution from Drs.\ Minh N. Dao and Lien Nguyen.

\section*{Declarations}

\subsection*{Statement about AI use}
The authors declare that no artificial intelligence tools or large language models were used in the research, writing, or preparation of this manuscript.

\bibliographystyle{habbrv}
\bibliography{bibliography}

\end{document}